\newcommand{\restr}{%
  \,\raisebox{-.127ex}{\reflectbox{\rotatebox[origin=br]{-90}{$\lnot$}}}\,%
}
\newcommand{\UUU}{\color{black}}
\newcommand{\EEE}{\color{black}}
\theoremstyle{definition}
\newtheorem{definition}{Definition}[section]
\theoremstyle{remark}
\newtheorem{remark}[definition]{Remark}
\theoremstyle{theorem}
\newtheorem{theorem}[definition]{Theorem}
\theoremstyle{theorem}
\newtheorem{lemma}[definition]{Lemma}
\theoremstyle{remark}
\theoremstyle{theorem}
\theoremstyle{theorem}
\theoremstyle{theorem}
\numberwithin{equation}{section}
\def\XXint#1#2#3{{\setbox0=\hbox{$#1{#2#3}{\int}$}
      \vcenter{\hbox{$#2#3$}}\kern-.5\wd0}}
\definecolor{ao}{rgb}{0.0, 0.5, 0.0}
\newcommand{\Om}{\Omega}
\newcommand{\R}{\mathbb{R}}
\newcommand{\HH}{\mathcal{H}}
\newcommand{\di}{\mathrm{d}}
\newcommand{\M}{\mathbb{M}}
\title{A new proof of compactness in $G(S)BD$}
\date{\today}
\author[S. Almi]{Stefano Almi}
\address[Stefano Almi]{Faculty of Mathematics, University of Vienna, 
Oskar-Morgenstern-Platz 1, 1090 Wien, Austria.}
\email{stefano.almi@univie.ac.at}
\author[E. Tasso]{Emanuele Tasso}
\address[Emanuele Tasso]{Technische Universit\"at Dresden, Faculty of Mathematics, 01062 Dresden, Germany}
\email{emanuele.tasso@tu-dresden.de}
 \subjclass[2010]{49J45,  	%Methods involving semicontinuity and convergence; relaxation
 			   74R10.  	%Brittle fracture
  }
 \keywords{Generalized functions of bounded deformation, compactness, brittle fracture.}
\begin{document}

\maketitle

\begin{abstract}
We prove a compactness result in $GBD$ which also provides a new proof of the compactness theorem in $GSBD$, due to Chambolle and Crismale~\cite[Theorem~1.1]{cris2}. Our proof is based on a Fr\'echet-Kolmogorov compactness criterion and does not rely on Korn or Poincar\'e-Korn inequalities.
\end{abstract}

\section{Introduction}

In this paper we prove a compactness result in $GBD$, which in particular provides an alternative proof of the compactness theorem in $GSBD$ obtained by Chambolle and Crismale in~\cite[Theorem~1.1]{cris2}. Referring to Section~\ref{s:preliminaries} for the notation used below, the theorem reads as follows.

% \begin{theorem}
% \label{t:comfk}
% Let $U \subseteq \R^{n}$ be an open bounded subset of~$\R^{n}$, let $\varphi \colon \mathbb{R}^+ \to \mathbb{R}^+$ be an increasing function such that
% \[
% \lim_{t \to +\infty} \frac{\varphi(t)}{t} = +\infty,
% \]
% and let $u_k \in GSBD(U)$ be such that 
% \[
% \sup_{k\in \mathbb{N}} \int_{U} \varphi(|e(u_k) |) \, \di x + \mathcal{H}^{n-1}(J_{u_k}) < \infty \,.
% \]
% Then, there exists a subsequence, still denoted by~$u_k$, such that the set 
% \[
% A := \{x \in U \ | \ |u_k (x)| \to +\infty \text{ as } k \to \infty  \}
% \]
% has finite perimeter, $u_k \to u$  a.e.~in~$U \setminus A$ and $e(u_{k}) \rightharpoonup e(u)$ weakly in~$L^{1}(U\setminus A; \M^{n}_{sym})$  for some function $u \in GSBD(U)$ with $u=0$ in~$A$. Furthermore,
% \begin{displaymath}
% \HH^{n-1}(J_{u} \cup \partial^{*}A) \leq \liminf_{k \to \infty} \, \HH^{n-1}(J_{u_{k}}) \,.
% \end{displaymath} 
% \end{theorem}

\begin{theorem}
\label{t:comfk}
Let $U \subseteq \R^{n}$ be an open bounded subset of~$\R^{n}$ and let $u_k \in GBD(U)$ be such that 
\begin{equation}
\label{e:hp-mu}
\sup_{k\in \mathbb{N}} \, \hat{\mu}_{u_k}(U) < + \infty \,.
\end{equation}
Then, there exists a subsequence, still denoted by~$u_k$, such that the set 
\[
A := \{x \in U : \ |u_k (x)| \to +\infty \text{ as } k \to \infty  \}
\]
has finite perimeter, $u_k \to u$  a.e.~in~$U \setminus A$ for some function $u \in GBD(U)$ with $u=0$ in~$A$. Furthermore,
\begin{equation}
\label{e:jump-lsc}
\HH^{n-1}(\partial^{*}A) \leq \lim_{\sigma \to \infty}\liminf_{k \to \infty} \, \HH^{n-1}(J^\sigma_{u_{k}}) \,,
\end{equation} 
where $J^\sigma_{u_k}:= \{x \in J_{u_k} \ \UUU : \ |[u_{k} (x) ] | \EEE \geq \sigma\}$.
\end{theorem}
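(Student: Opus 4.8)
\emph{Strategy.} The plan is to apply a Fr\'echet--Kolmogorov criterion to the truncated maps $v_k := \tau(u_k)$, where $\tau(z) := z(1+|z|^2)^{-1/2}$ is the radial $C^1$ diffeomorphism of $\R^n$ onto the open unit ball $B_1$, strictly increasing in $|z|$, with $|\nabla\tau(z)|\le C(1+|z|)^{-1}$. Since $|v_k|<1$, the sequence $(v_k)$ is uniformly bounded, so by Fr\'echet--Kolmogorov it is relatively compact in $L^1(U;\R^n)$ as soon as it admits a uniform modulus of continuity for translations; equivalently, it suffices to show that $(v_k)$ is bounded in $BV(U;\R^n)$. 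The heart of the matter, and the point where Korn-type inequalities are avoided, is the slicing estimate
\[
|Dv_k|(U)\ \le\ \hat\mu_{u_k}(U) + C_{U,n},
\]
which I would obtain by slicing: for $\xi\in\mathbb S^{n-1}$ and $\HH^{n-1}$-a.e.\ $y\in\Pi^\xi$ the one-dimensional slice $t\mapsto v_k(y+t\xi)$ is $BV$, and its total variation, after integrating over $y$ and over $\xi\in\mathbb S^{n-1}$, is controlled by the $\arctan$-weighted, direction-by-direction information carried by $\hat\mu_{u_k}$; the decay factor $(1+|u_k|)^{-1}$ in $\nabla\tau$ exactly compensates for the a priori uncontrolled (antisymmetric) part of the gradient, which is precisely what would otherwise force Korn's inequality. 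Together with \eqref{e:hp-mu} this gives $\sup_k|Dv_k|(U)<\infty$, hence a subsequence (not relabelled) with $v_k\to v$ in $L^1(U;\R^n)$ and $\mathcal L^n$-a.e.\ in $U$, with $v\in BV(U;\R^n)$ and $|v|\le 1$.

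\emph{Identification of $A$ and $u$.} Writing $g(r):=r(1+r^2)^{-1/2}$, one has $|v_k|=g(|u_k|)$ with $g\colon[0,+\infty]\to[0,1]$ a homeomorphism; hence a.e.\ convergence of $v_k$ forces $|u_k(x)|$ to converge in $[0,+\infty]$ for a.e.\ $x$, and $u_k(x)=\tau^{-1}(v_k(x))$ converges in $\R^n$ whenever $|v(x)|<1$. Therefore $A$ coincides, up to an $\mathcal L^n$-null set, with $\{\,|v|=1\,\}$, so $A$ is measurable; setting $u:=\tau^{-1}(v)$ on $U\setminus A$ and $u:=0$ on $A$ we obtain a measurable $u\colon U\to\R^n$ with $u_k\to u$ a.e.\ in $U\setminus A$ and $u=0$ on $A$.

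\emph{Finite perimeter of $A$ and the estimate \eqref{e:jump-lsc}.} For $\sigma>0$ consider the superlevel sets $E^\sigma_k:=\{\,|u_k|>\sigma\,\}$. For each fixed $\sigma$ these sets have finite perimeter in $U$ (on every line the slice $\{t:|u_k(y+t\xi)|>\sigma\}$ has finitely many essential boundary points), and I would prove the slicing estimate
\[
\mathrm{Per}(E^\sigma_k;U)\ \le\ \HH^{n-1}(J^{c\sigma}_{u_k}) + \omega(\sigma,k),\qquad \limsup_{\sigma\to\infty}\limsup_{k\to\infty}\omega(\sigma,k)=0,
\]
for a fixed constant $c>0$: a crossing of $\partial E^\sigma_k$ along a line is either produced by a jump of $u_k$ of amplitude $\gtrsim\sigma$ — hence counted in $J^{c\sigma}_{u_k}$ — or it is ``continuous'', and its total contribution, after integration over directions, is absorbed into $\sigma^{-1}\hat\mu_{u_k}(U)$ plus a term vanishing as $k\to\infty$ (here the $\arctan$-weighting in $\hat\mu$, which makes large continuous excursions cheap, is again essential). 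For all but countably many $\sigma$ one has $\mathbbm{1}_{E^\sigma_k}\to\mathbbm{1}_{\{|v|>g(\sigma)\}}$ in $L^1(U)$ as $k\to\infty$, and $\mathbbm{1}_{\{|v|>g(\sigma)\}}\to\mathbbm{1}_{A}$ in $L^1(U)$ as $\sigma\to\infty$; lower semicontinuity of the perimeter under $L^1$-convergence, applied in the iterated limit, then yields that $A$ has finite perimeter and $\HH^{n-1}(\partial^*A)=|D\mathbbm{1}_A|(U)\le\lim_{\sigma\to\infty}\liminf_{k\to\infty}\HH^{n-1}(J^\sigma_{u_k})$, which is \eqref{e:jump-lsc}.

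\emph{$u\in GBD(U)$ and main obstacle.} Finally, fix $\xi\in\mathbb S^{n-1}$; by Fubini, for $\HH^{n-1}$-a.e.\ $y\in\Pi^\xi$ one has $u_k(y+t\xi)\to u(y+t\xi)$ for a.e.\ $t$ with $y+t\xi\in U\setminus A$, so $\arctan(u_k(y+\cdot\,\xi)\cdot\xi)$ converges a.e.\ off $A^y_\xi$ to a function $\tilde f^y$ which, being an a.e.\ limit of one-dimensional $BV$ functions with variation bounded via $\hat\mu_{u_k}$, is $BV$ with $\int_{\Pi^\xi}|D\tilde f^y|(\cdot)\,\di y\le\liminf_k\hat\mu_{u_k}(U)$. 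Since $A$ has finite perimeter, $A^y_\xi$ has finitely many essential boundary points for a.e.\ $y$, hence $t\mapsto\arctan(u(y+t\xi)\cdot\xi)=\tilde f^y(t)\,\mathbbm{1}_{(A^y_\xi)^c}(t)$ is a product of bounded $BV$ functions, so $BV$, and integrating its variation over $y$ and $\xi$ gives a bound of the form $C\bigl(\liminf_k\hat\mu_{u_k}(U)+\HH^{n-1}(\partial^*A)\bigr)<\infty$, which exhibits the finite measure required by the definition of $GBD$. The main obstacle is the pair of slicing estimates stated above: converting the $\arctan$-weighted, single-direction control encoded in $\hat\mu_{u_k}$ into a genuine $BV$ bound for $v_k$ and a perimeter bound for the superlevel sets $E^\sigma_k$, \emph{without} ever invoking Korn or Poincar\'e--Korn inequalities to control the antisymmetric part of the gradient; the rest of the argument is comparatively routine.
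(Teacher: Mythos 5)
Your plan hinges on two slicing estimates that you flag as ``the main obstacle,'' and in fact they are not just an obstacle but the place where the argument breaks. The key claim $|D(\tau(u_k))|(U)\le\hat\mu_{u_k}(U)+C_{U,n}$ for the \emph{vector-valued} truncation $\tau(u_k)=u_k(1+|u_k|^2)^{-1/2}$ is, in effect, a nonlinear Korn-type inequality: it asserts that control on $D_\xi(u_k\cdot\xi)$ for every direction $\xi$ (which is all that $\hat\mu_{u_k}$ encodes) yields control on the full directional derivative $D_\xi\big(\tau(u_k)\big)=\nabla\tau(u_k)\,D_\xi u_k$. Along a one-dimensional slice in direction $\xi$, the quantity you would need to bound involves \emph{all} the off-diagonal components $\eta\cdot D_\xi u_k$, $\eta\neq\xi$; these are precisely the components that the slice measure $\hat\mu^\xi$ does not see, and for a general $u_k\in GBD(U)$ they may not even exist as measures. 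The heuristic that the factor $(1+|u_k|)^{-1}$ in $\nabla\tau$ ``exactly compensates the uncontrolled antisymmetric part'' is not correct as stated: the uncontrolled part of $D_\xi u_k$ need not be aligned with $u_k$, and it can be arbitrarily large where $|u_k|$ is small, so there is no pointwise cancellation. The same difficulty recurs in your perimeter estimate for $E^\sigma_k=\{|u_k|>\sigma\}$: slicing $|u_k|$ along a direction $\xi$ requires tracking $u_k\cdot\eta$ for $\eta\neq\xi$, which is again uncontrolled. So the proposal, as written, silently re-introduces the Korn-type information it was supposed to avoid.

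The paper takes a genuinely different route to get Fr\'echet--Kolmogorov, and the two extra lemmas (Lemmas~\ref{skeint} and~\ref{ske}) are exactly what make it possible to avoid Korn. They produce, along a subsequence, a special orthonormal basis $\{e_1,\dots,e_n\}$ such that on all but an $\epsilon_h$-measure set the components $|u_k(x)\cdot e_i|$ are mutually comparable up to a constant $c(\delta_h)$ (estimate~\eqref{uniquo}). With that in hand, the translation bound for the \emph{scalar} quantities $\tau(u_k\cdot e_i)$ is obtained by a ``zigzag'': the increment $\tau(u_k(x+te_j)\cdot e_i)-\tau(u_k(x)\cdot e_i)$ is split into four terms (see~\eqref{e:A2}--\eqref{e:A3} and~\eqref{e:100}), two of which change the \emph{projection} direction from $e_i$ to a nearby tilted direction $\xi^t_j$ (these are small thanks to~\eqref{uniquo} and the Lipschitz decay of $\tau$), and two of which move \emph{along} $\xi^t_j$ or $e_i$ (these are controlled directly by $\hat\mu^{\xi^t_j}_{u_k}$ and $\hat\mu^{e_i}_{u_k}$, i.e.\ by~\eqref{e:hp-mu}). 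No bound on $|D\tau(u_k)|$ is ever proven, nor is it needed: compactness is established component-by-component for $\tau(u_k\cdot e_i)$. You should incorporate the construction of the generic basis and the zigzag decomposition; without them the FK estimate cannot be closed from~\eqref{e:hp-mu} alone. Finally, for the $GBD$ membership of the limit, producing a slice bound for $\arctan(u\cdot\xi)$ in one direction $\xi$ at a time does not by itself exhibit a \emph{single} measure $\lambda$ valid for all $\xi$; the paper closes this gap by redefining $u$ to vanish on $A$ and invoking the known lower semicontinuity of $\hat\mu$ under a.e.\ convergence (the technique of~\cite{MR1630504, dal}), together with the perimeter bound for $A$.
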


We notice that the main difference with~\cite{cris2} is that we do not request equi-integrability of the approximate symmetric gradient~$e(u_{k})$ and boundedness of the measure of the jump sets~$J_{u_{k}}$, but only boundedness of~$\hat{\mu}_{u_{k}}(U)$, which is the natural assumption for sequences in $GBD(U)$. Hence, when passing to the limit, the absolutely continuous and the singular parts of~$\hat{\mu}_{u_{k}}$ could interact. For this reason, it is not possible to get weak $L^{1}$-convergence of the approximate symmetric gradients or lower-semicontinuity of the measure of the jump.

Nevertheless, we are able to recover the lower-semicontinuity~\eqref{e:jump-lsc} for the set~$A$ where $|u_{k}| \to +\infty$. In particular, formula~\eqref{e:jump-lsc} highlights that the emergence of the singular set~$A$ results from an uncontrolled jump discontinuity along the sequence~$u_{k}$. Hence, an equi-boundedness of the measure of the super-level sets~$J_{u_{k}}^{\sigma}$, i.e.,
\begin{displaymath}
\text{for every $\varepsilon>0$ there exists $\sigma_{\varepsilon} \in \mathbb{N}$ such that $\HH^{n-1} (J^{\sigma}_{u_{k}}) <\varepsilon$ for $\sigma \geq \sigma_{\varepsilon}$ and $k \in \mathbb{N}$,} 
\end{displaymath}
guarantees $\partial^*A = \emptyset$.

The $GSBD$-result~\cite[Theorem~1.1]{cris2} is recovered by replacing~\eqref{e:hp-mu} with
\begin{equation}
\label{e:hp-add}
\sup_{k \in \mathbb{N}} \int_{U} \phi( | e(u_{k} ) | ) \, \di x + \HH^{n-1}(J_{u_{k}} ) <+\infty\,,
\end{equation}
for a positive function~$\phi$ with superlinear growth at infinity. The novelty of our proof, presented in Section~\ref{appendix}, concerns the compactness part of Theorem \ref{t:comfk}. It is based on the Fr\'echet-Kolmogorov criterion and makes no use of Korn or Korn-Poincar\'e type of inequalities~\cite{MR3549205} (see also~\cite{CCS20, MR3714634, fri}), which are instead the key tools of~\cite{cris2}. The remaining lower-semicontinuity results of~\cite[Theorem~1.1]{cris2} can be obtained by standard arguments.

\section{Preliminaries and notation}\label{s:preliminaries}

We briefly recall here the notation used throughout the paper. For $d, k \in \mathbb{N}$, we denote by~$\mathcal{L}^{d}$ and $\HH^{k}$ the Lebesgue and the $k$-dimensional Hausdorff measure in~$\R^{d}$, respectively. Given $F \subseteq \R^{d}$, we indicate with $\text{dim}_{\HH}(F)$ the Hausdorff dimension of~$F$. For every compact subsets $F_{1}$ and~$F_{2}$ of~$\R^{d}$, ${\rm dist}_{\HH}(F_{1}, F_{2})$ stands for the Hausdorff distance between~$F_{1}$ and~$F_{2}$. We denote by~$\mathbbm{1}_{E}$ the characteristic function of a set~$E \subseteq \R^{d}$. For every measurable set $\Om \subseteq \R^{d}$ and every measurable function $u \colon \Om \to \R^{d}$, we further set $J_{u}$ the set of approximate discontinuity points of~$u$ and
\begin{displaymath}
J^{\sigma}_{u} := \{ x \in J_{u} : | [u] (x) | \geq \sigma\} \qquad \sigma >0\,,
\end{displaymath}
where $[u](x) := u^{+}(x) - u^{-}(x)$, $u^{\pm}(x) $ being the unilateral approximate limit of~$u$ at~$x$.

For $m, \ell \in \mathbb{N}$ we denote by~$\mathbb{M}^{m \times \ell}$ the space of $m \times \ell$ matrices with real coefficients, and set $\mathbb{M}^{m} := \mathbb{M}^{m \times m}$. The symbol~$\mathbb{M}^{m}_{sym}$ (resp.~$\mathbb{M}^{m}_{skw}$) indicates the subspace of~$\mathbb{M}^{m}$ of squared symmetric (resp.~skew-symmetric) matrices of order~$m$. We further denote by $SO(m)$ the set of rotation matrices.  

Let us now fix $n \in \mathbb{N}\setminus\{0\}$. For every $\xi \in \mathbb{S}^{n-1}$,~$\pi_{\xi}$ stands for the projection over the subspace~$\xi^{\perp}$ orthogonal to~$\xi$.  For every measurable set $V \subseteq \R^{n}$, every  $\xi \in \mathbb{S}^{n-1}$, and every $y \in \R^{n}$, we set 
\begin{align*}
& \Pi^{\xi} := \{ z \in \R^{n}: z \cdot \xi = 0\}\,, \qquad V^{\xi}_{y} := \{t \in \R: y + t\xi \in V\}\,.
\end{align*}  
 For $V \subseteq \R^{n}$ measurable,  $\xi \in \mathbb{S}^{n-1}$, and $y \in \R^{n}$ we define
\begin{align*}
& \UUU \hat{u}^{\xi}_{y} (t) \EEE := u(y + t\xi) \cdot \xi \qquad \text{for every $t \in V^{\xi}_{y}$}\,.
% \qquad J^{1}_{\hat{u}^{\xi}_{y}} := \{ t \in V^{\xi}_{y} : \,| [\hat{u}^{\xi}_{y}]| >1\}\,.
\end{align*}

For every open bounded subset $U$ of~$\R^{n}$, the space $GBD(U)$ of generalized functions of bounded deformation~\cite{dal} is defined as the set of measurable functions $u \colon U \to \R^{n}$ which \UUU admit \EEE a positive Radon measure $\lambda \in \mathcal{M}^{+}_{b}(U)$ such that for every $\xi \in \mathbb{S}^{n-1}$ one of the two equivalent conditions is satisfied \cite[Theorem~3.5]{dal}:
\begin{itemize}

\item for every $\theta \in C^{1}(\R; [-\tfrac{1}{2}; \tfrac{1}{2}])$ such that $0 \leq \theta' \leq 1$, the partial derivative $D_{\xi} (\theta (u \cdot \xi))$ is a Radon measure in $U$ and $| D_{\xi} (\theta (u \cdot \xi)) | (B) \leq \lambda(B)$ for every Borel subset~$B$ of~$U$;

\item for $\HH^{n-1}$-a.e.~$y \in \Pi_{\xi}$ the function~$\hat{u}^{\xi}_{y}$ belongs to $BV_{loc}(U^{\xi}_{y})$ and
\begin{equation}\label{e:GSBD}
\int_{\Pi^{\xi}} \big| (D \hat{u}^{\xi}_{y}) \big| \big( B^{\xi}_{y} \setminus J^{1}_{\hat{u}^{\xi}_{y}} \big) + \HH^{0} \big( B^{\xi}_{y} \cap J^{1}_{\hat{u}^{\xi}_{y}} \big) \, \di \HH^{n-1}(y) \leq \lambda(B)
\end{equation}
for every Borel subset~$B$ of~$U$.

\end{itemize}
A function $u$ belongs to $GSBD(U)$ if $\hat{u}^{\xi}_{y} \in SBV_{loc}(U^{\xi}_{y})$ and~\eqref{e:GSBD} holds. Every function $u \in GBD(U)$ admits an approximate symmetric gradient~$e(u) \in L^{1}(U ; \mathbb{M}^{n}_{sym})$. The jump set~$J_u$ is countably $(\HH^{n-1}, n-1)$-rectifiable with approximate unit normal vector~$\nu_{u}$. We will also use measures $\hat{\mu}^{\xi}, \hat{\mu}_{u} \in \mathcal{M}^{+}_{b}(U)$ defined in~\cite[Definitions~4.10 and~4.16]{dal} for $u \in GBD(U)$ and $\xi \in \mathbb{S}^{n-1}$. We further refer to~\cite{dal} for an exhaustive discussion on the fine properties of functions in~$GBD(U)$.

\section{Proof of Theorem~\ref{t:comfk}}
\label{appendix}

This section is devoted to the presentation of an alternative proof of Theorem~\ref{t:comfk}, based on the Fr\'echet-Kolmogorov compactness criterion.
We start by giving two definitions.

 \begin{definition}\label{d:3.1}
Let $\Xi = \{ \xi_{1}, \ldots, \xi_{n} \}$ denote an orthonormal basis of~$\mathbb{R}^n$. We define 
 \[
 S_{\Xi,0} := \bigcup_{\xi \in \Xi}\{x \in \mathbb{R}^n : \, |x|=1, \ x \in \Pi^{\xi}\}.
 \]
  Given $\delta >0$ we define the $\delta$-neighborhood of $S_{\Xi,0}$ as 
 \[
 S_{\Xi,\delta} := \{x \in \mathbb{R}^n : \,  |x|=1, \ \ \text{dist}(x,S_{\Xi,0}) < \delta \}.
 \]
 \end{definition}

 \begin{definition}
 \label{d:fambas}
 In order to simplify the notation, given a family $\mathcal{K}$ and a positive natural number $m$, we denote by $\mathcal{K}_m$ the set consisting of all subsets of $\mathcal{K}$ containing exactly $m$-elements of $\mathcal{K}$, i.e.
 \[
 \mathcal{K}_m := \{\mathcal{Z} \in \text{P}(\mathcal{K}) : \,  \#\mathcal{Z} = m\}.
 \]
 \end{definition}

 In order to prove Theorem~\ref{t:comfk}, we need the following two lemmas, which allow us to construct a suitable orthonormal basis of~$\R^{n}$ that will be used to test the Fr\'echet-Kolmogorov compactness criterium.

 \begin{lemma}
 \label{skeint}
Let $M \in \mathbb{N}$ be such that $M \geq n$ and consider a family $\mathcal{K} := \{\Xi_1, \dotsc,\Xi_M \}$ of orthonormal bases of $\mathbb{R}^n$ such that for every $\mathcal{Z} \in \mathcal{K}_{n}$
 \begin{align}
 \label{e:skint1}
 \bigcap_{\Xi \in \mathcal{Z}} S_{\Xi,0} = \emptyset\,.   
 \end{align}
 Then, there exists a further orthonormal basis $\Sigma = \{ \xi_1, \dotsc, \xi_n\} $ such that for every $\mathcal{Z} \in \mathcal{K}_{n-1}$
  \begin{align}
 \label{e:skint2}
 S_{\Sigma,0} \cap \bigcap_{\Xi \in \mathcal{Z}} S_{\Xi,0} = \emptyset \,.
 \end{align}
 \end{lemma}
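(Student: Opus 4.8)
The plan is to view the statement as an induction step for enlarging families of bases: because $M\ge n$, combining \eqref{e:skint1} with \eqref{e:skint2} shows that the enlarged family $\mathcal K\cup\{\Sigma\}$ again enjoys property \eqref{e:skint1}. For the proof itself I would first introduce the \emph{exceptional set}
\[
P:=\bigcup_{\mathcal Z\in\mathcal K_{n-1}}\,\bigcap_{\Xi\in\mathcal Z}S_{\Xi,0}\subseteq\mathbb S^{n-1}\,,
\]
and observe that, since $S_{\Sigma,0}\cap\bigcap_{\Xi\in\mathcal Z}S_{\Xi,0}\subseteq S_{\Sigma,0}\cap P$ for every $\mathcal Z\in\mathcal K_{n-1}$, it is enough to prove: (i) $P$ is a finite set; and (ii) there exists an orthonormal basis $\Sigma$ with $S_{\Sigma,0}\cap P=\emptyset$.

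To prove (i), fix $\mathcal Z=\{\Xi_{i_1},\dots,\Xi_{i_{n-1}}\}\in\mathcal K_{n-1}$ and write $\Xi_{i_k}=\{\xi^k_1,\dots,\xi^k_n\}$. Expanding the union defining each $S_{\Xi_{i_k},0}$ yields
\[
\bigcap_{k=1}^{n-1}S_{\Xi_{i_k},0}=\bigcup_{(l_1,\dots,l_{n-1})\in\{1,\dots,n\}^{n-1}}\mathbb S^{n-1}\cap\big(\mathrm{span}\{\xi^1_{l_1},\dots,\xi^{n-1}_{l_{n-1}}\}\big)^{\perp}\,.
\]
I claim that each of the $n^{n-1}$ selections $\xi^1_{l_1},\dots,\xi^{n-1}_{l_{n-1}}$ is linearly independent: if not, choose (using $M\ge n$) a basis $\Xi_{i_n}\in\mathcal K\setminus\mathcal Z$ and any $\xi^n_{l_n}\in\Xi_{i_n}$; then $\xi^1_{l_1},\dots,\xi^n_{l_n}$ are linearly dependent, their span is a proper subspace of $\R^n$, and its orthogonal complement contains a unit vector $z$, which would lie in $\bigcap_{k=1}^{n}S_{\Xi_{i_k},0}$ with $\{\Xi_{i_1},\dots,\Xi_{i_n}\}\in\mathcal K_{n}$, contradicting \eqref{e:skint1}. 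Hence every such selection spans an $(n-1)$-dimensional subspace, whose orthogonal complement meets $\mathbb S^{n-1}$ in exactly two points; therefore $\bigcap_{k=1}^{n-1}S_{\Xi_{i_k},0}$ has at most $2n^{n-1}$ elements, and since $\mathcal K_{n-1}$ is finite, so is $P$.

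To prove (ii), note that every $p\in P$ has $|p|=1$, so $S_{\Sigma,0}\cap P=\emptyset$ exactly when no element of $\Sigma$ belongs to the finite union of hyperplanes $H:=\bigcup_{p\in P}p^{\perp}$; it thus suffices to produce an orthonormal basis $\{\xi_1,\dots,\xi_n\}$ with $\xi_j\notin H$ for every $j$ (if $P=\emptyset$ the canonical basis works). I would choose $\xi_1,\dots,\xi_n$ one at a time, taking $\xi_j\in V_j:=\mathrm{span}\{\xi_1,\dots,\xi_{j-1}\}^{\perp}$ (so $V_1=\R^n$), while keeping the invariant that $\pi_{V_j}(p)\ne0$ for all $p\in P$, which is trivial for $j=1$. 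At a step $j\le n-1$ one has $\dim V_j\ge2$, and $v\cdot p=v\cdot\pi_{V_j}(p)$ for $v\in V_j$; hence the vectors to be excluded, namely those $v\in\mathbb S^{n-1}\cap V_j$ with $v\cdot p=0$ for some $p$, together with the finitely many points $\pm\pi_{V_j}(p)/|\pi_{V_j}(p)|$, form a finite union of lower-dimensional subspheres and points, which is negligible in the positive-dimensional sphere $\mathbb S^{n-1}\cap V_j$, so an admissible $\xi_j$ exists. Excluding the points $\pm\pi_{V_j}(p)/|\pi_{V_j}(p)|$ preserves the invariant at level $j+1$, since $\pi_{V_{j+1}}(p)=\pi_{V_j}(p)-(\pi_{V_j}(p)\cdot\xi_j)\,\xi_j$ vanishes precisely when $\xi_j$ is parallel to $\pi_{V_j}(p)$. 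Finally, for $j=n$ the space $V_n$ is a line $\R w$ with $|w|=1$, and the invariant forces $p\cdot w\ne0$ for all $p\in P$, i.e.\ $w\notin H$; set $\xi_n:=w$.

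The one delicate point is precisely this last step: with $\xi_1,\dots,\xi_{n-1}$ fixed, $\xi_n$ is determined up to sign and cannot be chosen generically, so the construction must have guaranteed in advance that the remaining line avoids $H$ — which is exactly what the projection invariant does. (Alternatively, (ii) is immediate from the fact that $\{Q\in O(n):Qe_j\notin H\ \text{for all }j\}$ is the complement in $O(n)$ of the finitely many Haar-null sets $\{Q:(Qe_j)\cdot p=0\}$, hence nonempty.)
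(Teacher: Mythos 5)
Your proof is correct but takes a genuinely more elementary route in its second half. Both you and the paper first reduce the problem to two facts: (a) the exceptional set $P:=\bigcup_{\mathcal Z\in\mathcal K_{n-1}}\bigcap_{\Xi\in\mathcal Z}S_{\Xi,0}$ is finite, and (b) some orthonormal basis $\Sigma$ avoids the finite union of hyperplanes $\bigcup_{p\in P}p^{\perp}$. For (a) the two arguments have the same content: you expand each intersection as a union over $(n-1)$-fold selections and show linear independence directly by adjoining a vector from the extra basis afforded by $M\ge n$, whereas the paper argues by contradiction via Hausdorff dimension and Grassmann's formula; both deliver the same bound (each piece is $\pm$ one point). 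For (b) the approaches genuinely diverge. The paper parametrizes $\Sigma$ by $SO(n)$, realizes the condition ``$x\in S_{\Sigma,0}$'' as the image of a smooth map $\Lambda_i\colon SO(n)\times\mathbb{S}^{n-2}\to\mathbb{S}^{n-1}$, computes the rank of $d\Lambda_i$ via the kernel of $X\mapsto XP_iO\tilde e_1$ on $\mathbb M^n_{skw}$, and concludes that the bad set of $\Sigma$'s is $\HH^{(n^2-n)/2}$-null. You instead build $\xi_1,\dots,\xi_n$ one at a time inside successive orthogonal complements $V_j$, carrying the invariant $\pi_{V_j}(p)\ne0$ so that the last, forced vector $\xi_n$ still misses every $p^{\perp}$; you correctly flag and handle the key subtlety that $\xi_n$ cannot be chosen generically. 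Your construction is more elementary (no manifold theory, no rank computations) and self-contained. The one trade-off is that the paper's genericity statement --- $\HH^{(n^2-n)/2}$-a.e.\ $\Sigma$ works, and in particular $\Sigma$ can be taken in any prescribed open subset of $SO(n)$ --- is actually exploited in Remark~\ref{r:skecon} and the proof of Lemma~\ref{ske}; your main construction gives only existence, but your closing Haar-measure alternative recovers that genericity in one line, so the two approaches are ultimately of equal strength.
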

 
 \begin{proof}
First of all notice that whenever $\mathcal{Z} \in \mathcal{K}_n$ is such that
 \[
 \bigcap_{\Xi \in \mathcal{Z}} S_{\Xi,0} = \emptyset
 \]
 then we have
 \begin{equation}
     \label{e:skint3}
     \mathcal{H}^0 \Big(\bigcap_{\Xi \in \mathcal{X}} S_{\Xi,0} \Big) < + \infty \qquad \text{for every $\mathcal{X} \in \mathcal{Z}_{n-1}$}.
 \end{equation}
 Indeed, let us suppose by contradiction that \eqref{e:skint3} does not hold for some $\mathcal{X} \in \mathcal{Z}_{n-1}$. Since for $\Xi \in \mathcal{X}$ we have that each $S_{\Xi,0}$ is a finite union of $(n-1)$-dimensional subspaces of~$\mathbb{R}^n$ intersected with~$\mathbb{S}^{n-1}$, the equality $\mathcal{H}^0 \Big(\bigcap_{\Xi \in \mathcal{X}} S_{\Xi,0} \Big)=+\infty$ implies that
 \[
 \text{dim}_{\mathcal{H}}\, \Big(\bigcap_{\Xi \in \mathcal{X}} S_{\Xi,0} \Big) \geq 1 \,.
 \]
 As a consequence we get
 \[
 \text{dim}_{\mathcal{H}}\, \Big(\bigcap_{\Xi \in \mathcal{X}} \bigcup_{\xi \in \Xi}\{ \xi^\bot  \} \Big) \geq 2 \,.
 \]
 Hence, if we denote by $\overline{\Xi}$ the basis contained in $\mathcal{Z} \setminus \mathcal{X}$, then by using Grassmann's formula
 \[
 \text{dim}(V) + \text{dim}(W) - \text{dim}(V \cap W) = \text{dim}(V+W)\leq n \, ,
 \]
 which is valid for each couple $V,W$ of vector subspaces of $\mathbb{R}^n$, we deduce
 \[
 \text{dim}_{\mathcal{H}}\, \Big( \bigcup_{\xi \in \overline{\Xi}}\{\xi^\bot\} \cap \bigcap_{\Xi \in \mathcal{X}} \bigcup_{\xi \in \Xi}\{ \xi^\bot  \} \Big) \geq 1 \,,
 \]
 hence
 \[
 \bigcap_{\Xi \in \mathcal{Z}} S_{\Xi,0} \neq \emptyset
 \]
 which is a contradiction to the assumption \eqref{e:skint1}.
 
  Fix $\{e_1, \dotsc, e_n \}$ an orthonormal basis of~$\mathbb{R}^n$ and let $SO(n)$ be the group of special orthogonal matrices. It can be endowed with the structure of an $\big(\frac{n^2-n}{2}\big)$-dimensional submanifold of $\mathbb{R}^{n^{2}}$. We can identify an element $O \in SO(n)$ with an $(n \times n)$-matrix whose columns are the vectors of an orthonormal basis~$\Xi$ written with respect to $\{e_1, \dotsc, e_n  \}$ and viceversa. 
  
In order to show the existence of~$\Sigma$ satisfying~\eqref{e:skint2} we prove the following stronger  condition:  given $\mathcal{Z} \in \mathcal{K}_{n-1}$, for $\mathcal{H}^{(n^2-n)/2}$-a.e. choice of~$\Sigma$ we have that
 \begin{equation}
 \label{e:empty}
 S_{\Sigma,0} \cap \bigcap_{\Xi \in \mathcal{Z}} S_{\Xi,0}  =\emptyset \, .
 \end{equation}
 This easily implies the existence of an orthonormal basis~$\Sigma$ satisfying~\eqref{e:skint2}, as the choice of~$\mathcal{Z} \in \mathcal{K}_{n-1}$ is finite. To show~\eqref{e:empty}, for every $i \in \{1,\dotsc, n\}$  let us define the smooth map $\Lambda_{i} \colon SO(n) \times \{y \in \mathbb{R}^{n-1} : \,  |y|=1 \} \to \mathbb{S}^{n-1}$ as
 \[
\Lambda_{i}(\Sigma , y) := \sum_{ j < i} y_j\xi_j + \sum_{j>i} y_{j-1} \xi_{j} \,,
 \]
 where~$\xi_j$ denotes the $j$-th column vector of the matrix representing~$\Sigma$. In order to show~\eqref{e:empty}, we claim that it is enough to prove that for every $x \in \mathbb{S}^{n-1}$ we have
 \begin{equation}
 \label{e:thin}
 \mathcal{H}^{(n^2-n )/2} \big( \pi_{SO(n)}(\{\Lambda_{i}^{-1}(x)\}) \big) = 0 \qquad \text{for $i \in \{1, \dotsc, n\}$},
 \end{equation}
 where $\pi_{SO(n)} \colon SO(n) \times \{y \in \mathbb{R}^{n-1} : \,  |y|=1 \} \to SO(n)$ is the canonical projection map. Indeed, if~$\Sigma$ does not belong to $\pi_{SO(n)}(\{\Lambda_{i}^{-1}(x)\})$ for every $x \in \bigcap_{\Xi \in \mathcal{Z}} S_{\Xi,0}$ and for every $i \in \{1, \dotsc, n\}$, then by using the definition of the map~$\Lambda_i$ we deduce immediately that~$\Sigma$ satisfies $ S_{\Sigma,0} \cap \bigcap_{\Xi \in \mathcal{Z}} S_{\Xi,0}  =\emptyset$. Therefore, if~\eqref{e:thin} holds, then the set (remember that $\bigcap_{\Xi \in \mathcal{Z}} S_{\Xi,0}$ is a discrete set)
 \[
 \bigcup_{i=1}^n \bigcup_{x \in \bigcap_{\Xi \in \mathcal{Z}} S_{\Xi,0} }\pi_{SO(n)}(\{\Lambda_{i}^{-1}(x)\})
 \] 
 is of $\mathcal{H}^{(n^2-n)/2}$-measure zero and~\eqref{e:empty} holds true. Thus, $\HH^{(n^{2} - n)/2}$-a.e.~$\Sigma$ satisfies~\eqref{e:skint2}.
 
 To prove~\eqref{e:thin} it is enough to show that the differential of $\Lambda_{i}$ has full rank at every point $z \in SO(n) \times \{y \in \mathbb{R}^{n-1} : \, |y|=1 \}$. Indeed, this implies that~$\Lambda_{i}^{-1}(x)$ is an $\big(\frac{n^2-n -2}{2}\big)$-dimensional submanifold for every $x \in \mathbb{S}^{n-1}$, which ensures the validity of~\eqref{e:thin} since 
 \begin{align*}
 & \#\big(\{\pi_{SO(n)}^{-1}(\Xi)\} \cap \{\Lambda_i^{-1}(x)\}\big)=1, \ \ x \in \mathbb{S}^{n-1}, \\ 
& \frac{n^2-n-2}{2} < \frac{n^2-n}{2} = \text{dim}_{\mathcal{H}}(SO(n)) \ \ (n \geq 2)\,.
 \end{align*}
 Notice that $\Lambda_{i}$ is the restriction to $SO(n) \times \{y \in \mathbb{R}^{n-1} : \,  |y|=1 \}$ of the map $\tilde{\Lambda}_{i} \colon \mathbb{M}^{n} \times \mathbb{R}^{n-1} \to \mathbb{R}^n$ defined as
 \[
\tilde{\Lambda}_{i}(\Theta,y) := \sum_{j< i}y_j \theta_j + \sum_{j>i} y_{j-1}  \theta_{j}\,, 
 \]
 where~~$\theta_j$ is the $j$-th column vector of the matrix $\Theta \in \mathbb{M}^{n}$. To show that the differential of $\Lambda_{i}$ has full rank everywhere, it is enough to check that for every $z \in SO(n) \times \{y \in \mathbb{R}^{n-1} : \,  |y|=1 \}$ the differential of $\tilde{\Lambda}_{i}$ restricted to $\text{Tan}(SO(n) \times \{y \in \mathbb{R}^{n-1} :\,  |y|=1 \},z)$ has rank equal to $n-1$. By using the relation 
 \[
\tilde \Lambda_i (M\Theta,y)=M  \tilde \Lambda_i(\Theta,y) \,,
 \]
 valid for every $M \in \mathbb{M}^n$, we can reduce ourselves to the case $z = (\mathrm{I} , \overline{y} )$, where~$\mathrm{I}$ denotes the identity matrix and $\overline{y} \in \mathbb{R}^{n-1}$ is such that $|\overline{y}|=1$. It is well known that
 \[
 \text{Tan}(SO(n) \times \{\zeta \in \mathbb{R}^{n-1} : \, |\zeta |=1 \},z) \cong \mathbb{M}_{skw}^{n} \times \text{Tan}(\{ \zeta  \in \mathbb{R}^{n-1} :\, | \zeta |=1 \}, \overline{y}) \,,
 \]
 where $\mathbb{M}_{skw}^{n}$ denotes the space of skew symmetric matrices. Using that $\mathbb{R}^{n^2+n - 1} \cong \mathbb{M}^{n} \times \mathbb{R}^{n-1} $, we identify a point $Z \in \mathbb{R}^{n^2+n-1}$ as $Z= ((x^i_j)_{i,j=1}^n,y_1,\dotsc,y_{n-1})$. A direct computation shows that the differential of~$\Lambda_{i}$ at the point $(\mathrm{I},\overline{y})$ acting on the vector~$Z$ is given by 
 \[
d \tilde{\Lambda}_{i}(\mathrm{I} ,\overline{y})[Z] = \sum_{l=1}^n \sum_{j < i}  (x^j_l\overline{y}_j+ \delta_{jl} y_j)e_l + \sum_{j>i} ( x^{j}_{l} \overline{y}_{j-1} + \delta_{jl} y_{j-1}) e_{l}\,.
 \]
%  \[
%  d\tilde{\Lambda}_{ij}(\mathrm{I} ,\overline{y})[X]
% = \sum_{l=1}^n(x_l^i\overline{y}_1 + x_l^j\overline{y}_2 + y_1 \delta_{il} + y_2 \delta_{jl}) e_l.  
%  \]

 It is better to introduce the matrix $P_i \in \mathbb{M}^{n\times (n-1)}$ defined as 
\[
(P_i)_k^m:= 
\begin{cases}
 \delta_{km} &\text{ if }1\leq m<i \,,\\
 \delta_{k-1m} &\text{ if }i \leq m \leq n-1\,.
\end{cases}
\]
Roughly speaking, given $X \in \mathbb{M}^{l \times n}$, the product $XP_i$ is the matrix in $\mathbb{M}^{l\times (n-1)}$ obtained by removing from~$X$ the i-th column, while given $Y \in \mathbb{M}^{(n-1) \times l}$, the product~$P_iY$ is the matrix in~$\mathbb{M}^{n \times l}$ obtained by adding a new row made of zero entries at the $i$-th position. With this definition the linear map $d\Lambda_i(\mathrm{I},\overline{y})(\cdot)$ can be rewritten more compactly as
\begin{equation}
\label{e:comsys1}
d\Lambda_i(\mathrm{I},\overline{y})[(X,y)]=XP_i\overline{y}+P_iy, \ \ X \in \mathbb{M}_{skw}^n, \ \ y \in \text{Tan}(\{ \zeta \in \mathbb{R}^{n-1}: \, | \zeta |=1  \},\overline{y})\,.
\end{equation}
Given $O \in SO(n-1)$ such that $O\tilde{e}_1=\overline{y}$ ($\{\tilde{e}_1,\dotsc,\tilde{e}_{n-1} \}$ denotes the reference orthonormal basis of $\mathbb{R}^{n-1}$), we can rewrite the system as
\begin{equation}
    \label{e:comsys2}
    d\Lambda_i(\mathrm{I},\overline{y})[(X,y)]=XP_iO \tilde{e}_1 +P_iy, \ \ X \in \mathbb{M}_{skw}^n, \ \ y \in \text{Tan}(\{\zeta  \in \mathbb{R}^{n-1} :  | \zeta |=1  \},\overline{y}).
\end{equation}
Hence, by the well known relation
\begin{equation}
\label{e:imker}
\text{dim}(V) -\text{dim}(\text{Im}[\alpha]) = \text{dim}(\text{ker}[\alpha]) \,, 
\end{equation}
valid for every linear map $\alpha \colon V \to W$ and every finite dimensional vector spaces $V$ and $W$,
if we want to prove that $d\Lambda_i(\mathrm{I},\overline{y})$ has full rank, i.e. 
\begin{equation}
    \label{e:dim}
    \text{dim}(\text{Im}[(\cdot)P_iO\tilde{e}_1 + P_i(\cdot)]) = n-1 \,,
\end{equation}
since 
\[
n-1 \geq \text{dim}(\text{Im}[(\cdot)P_iO\tilde{e}_1 + P_i(\cdot)]) \geq \text{dim}(\text{Im}[(\cdot)P_iO\tilde{e}_1])
\]
(where the first inequality comes from $\text{Im}[d\Lambda_i(\mathrm{I},\overline{y})] \subset \text{Tan}(\mathbb{S}^{n-1},\Lambda_i(\mathrm{I},\overline{y}))$), it is enough to show that
\begin{equation}
\label{e:dimim}
\text{dim}(\text{Im}[(\cdot)P_iO\tilde{e}_1]) = n-1 \,.
\end{equation}
Again by relation \eqref{e:imker} we can reduce ourselves to find the dimension of the kernel of the map $\mathbb{M}^n_{skw} \ni X \mapsto XP_iO\tilde{e}_1$. But this dimension can be easily computed to be
\[
\text{dim}(\text{ker}[(\cdot)P_iO\tilde{e}_1]) = \sum_{k=1}^{n-2} k = \frac{(n-2)(n-1)}{2} \,,
\]
which immediately implies \eqref{e:dimim}.
 \end{proof}
 
 \begin{remark}
 \label{r:skecon}
By a standard argument from linear algebra it is possible to construct $n$ orthonormal bases of $\mathbb{R}^n$, say $\mathcal{K} = \{\Xi_1,\dotsc, \Xi_n \}$ satisfying
 \[
 \bigcap_{\Xi \in \mathcal{K}} S_{\Xi,0} = \emptyset \,.
 \]
 Moreover, given $U \subset SO(n)$ open, then $\Xi_i$ can be chosen in such a way that 
 \[
 \Xi_i \in U, \ \ i \in \{1,\dotsc,n \} \,.
 \]
 Therefore, Lemma \ref{skeint}, and in particular condition \eqref{e:empty}, tells us that for every $M \in \mathbb{N}$ ($M\geq n$) we can always find a family of orthonormal bases of $\mathbb{R}^n$, say $\mathcal{K} = \{\Xi_1,\dotsc, \Xi_M\}$, satisfying \eqref{e:skint1} and 
 \[
 \Xi_i \in U, \ \ i \in \{1,\dotsc,M\} \,.
 \]
  \end{remark}

 \begin{lemma}
 \label{ske}
 Let $A \subset \mathbb{R}^n$ be a measurable set with $\mathcal{L}^n(A) < \infty$, let $(B_k)_{k=1}^\infty$ be measurable subsets of $A$, and let $(v_k)_{k=1}^\infty$ be measurable functions $v_k \colon B_k \to \mathbb{S}^{n-1}$.  Then, given a sequence $\epsilon_h \searrow 0$, there exist a sequence $\delta_h \searrow 0$ with $\delta_h >0$, a map $\phi \colon \mathbb{N} \to \mathbb{N}$, and an orthonormal basis~$\Xi$ of $\mathbb{R}^n$ such that, up to passing through a subsequence on~$k$, $\mathcal{L}^n(v_k^{-1}(S_{\Xi,\delta_h})) \leq \epsilon_h$ for every $k \geq \phi(h)$. 
 \end{lemma}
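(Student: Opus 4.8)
The plan is to replace the whole sequence $(v_k)$ by a single finite measure on $\mathbb{S}^{n-1}$ by weak-$*$ compactness, and then to pick the orthonormal basis $\Xi$ \emph{generically} among all rotations of a reference basis, so that the skeleton $S_{\Xi,0}$ carries no mass for that limit measure. First I would introduce the push-forward measures $\nu_k := (v_k)_\#\big(\mathcal{L}^n \llcorner B_k\big)$, which are finite positive Borel measures on $\mathbb{S}^{n-1}$ with $\nu_k(\mathbb{S}^{n-1}) = \mathcal{L}^n(B_k) \le \mathcal{L}^n(A) < +\infty$ and which satisfy $\mathcal{L}^n\big(v_k^{-1}(E)\big) = \nu_k(E)$ for every Borel set $E \subseteq \mathbb{S}^{n-1}$. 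Since $\mathbb{S}^{n-1}$ is compact, after extracting a subsequence of $(v_k)$ — not relabeled, and this is the only place where the subsequence on $k$ is used — I may assume $\nu_k \overset{*}{\rightharpoonup} \nu$ for some finite positive Borel measure $\nu$ on $\mathbb{S}^{n-1}$.

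Next I would fix an auxiliary orthonormal basis $\Xi_0 = \{e_1,\dots,e_n\}$ of $\mathbb{R}^n$ and the Haar probability measure $\mu$ on $SO(n)$, and use the standard fact that for every fixed $y \in \mathbb{S}^{n-1}$ the map $O \mapsto O^{-1}y$ pushes $\mu$ forward to the uniform probability measure on $\mathbb{S}^{n-1}$. Since $S_{\Xi_0,0}$ is a finite union of $(n-2)$-dimensional great subspheres, it is null for that uniform measure; hence, by Tonelli's theorem,
\[
\int_{SO(n)} \nu\big(O\,S_{\Xi_0,0}\big)\,\di\mu(O) = \int_{\mathbb{S}^{n-1}}\Big(\int_{SO(n)} \mathbbm{1}_{S_{\Xi_0,0}}\big(O^{-1}y\big)\,\di\mu(O)\Big)\di\nu(y) = 0 \, .
\]
Consequently there is $O_0 \in SO(n)$ with $\nu\big(O_0 S_{\Xi_0,0}\big) = 0$, and I would take $\Xi := \{O_0 e_1,\dots,O_0 e_n\}$, so that $S_{\Xi,0} = O_0 S_{\Xi_0,0}$ and $\nu(S_{\Xi,0}) = 0$. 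Since the displayed integral vanishes, $O_0$ — equivalently $\Xi$ — may in fact be chosen in any prescribed set of rotations of positive Haar measure (e.g.\ inside a given open subset of $SO(n)$), which the application, together with Remark~\ref{r:skecon}, would then make use of.

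Finally I would enclose $S_{\Xi,\delta}$ inside the closed set $\overline{S}_{\Xi,\delta} := \{x \in \mathbb{S}^{n-1} : \mathrm{dist}(x, S_{\Xi,0}) \le \delta\}$. As $S_{\Xi,0}$ is closed we have $\bigcap_{\delta > 0} \overline{S}_{\Xi,\delta} = S_{\Xi,0}$, so by continuity from above of the finite measure $\nu$ one gets $\nu(\overline{S}_{\Xi,\delta}) \to 0$ as $\delta \to 0^+$; I would then fix $\delta_h \searrow 0$ with $\delta_h > 0$ and $\nu(\overline{S}_{\Xi,\delta_h}) < \epsilon_h / 2$. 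Since $\overline{S}_{\Xi,\delta_h}$ is closed, the portmanteau theorem applied to $\nu_k \overset{*}{\rightharpoonup} \nu$ yields $\limsup_{k\to\infty} \nu_k(\overline{S}_{\Xi,\delta_h}) \le \nu(\overline{S}_{\Xi,\delta_h}) < \epsilon_h / 2$, so for each $h$ there is $\phi(h) \in \mathbb{N}$ with $\nu_k(\overline{S}_{\Xi,\delta_h}) \le \epsilon_h$ for all $k \ge \phi(h)$. Then for $k \ge \phi(h)$,
\[
\mathcal{L}^n\big(v_k^{-1}(S_{\Xi,\delta_h})\big) = \nu_k(S_{\Xi,\delta_h}) \le \nu_k(\overline{S}_{\Xi,\delta_h}) \le \epsilon_h \, ,
\]
which is the assertion.

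The step I expect to be the real content is making the inequalities point the right way across the weak-$*$ limit: for the \emph{open} set $S_{\Xi,\delta_h}$, weak-$*$ convergence only provides the useless lower bound $\liminf_k \nu_k(S_{\Xi,\delta_h}) \ge \nu(S_{\Xi,\delta_h})$, so one is forced to pass to the \emph{closed} neighborhood $\overline{S}_{\Xi,\delta_h}$ and to exploit that, for a generic rotation, $\nu$ charges $S_{\Xi,0}$ with zero mass — which is precisely what the Haar-average computation above provides. Everything else (Borel measurability of the push-forwards, Tonelli, continuity of measures, the portmanteau statement, and the degenerate case $\mathcal{L}^n(B_k) \to 0$, where $\nu = 0$) is routine.
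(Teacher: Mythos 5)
Your argument is correct, and it takes a genuinely different route from the paper's, replacing combinatorial machinery by a short measure-theoretic one. The paper's proof relies on Lemma~\ref{skeint} and Remark~\ref{r:skecon} to build arbitrarily large families of orthonormal bases whose skeletons $S_{\Xi,0}$ have empty $n$-fold intersection, then runs an induction on the order of intersection together with an inclusion--exclusion/pigeonhole argument to extract, for a single tolerance $\epsilon$, one basis and one $\delta$ along a subsequence, and finally iterates over $\epsilon_h$ with a nesting and a diagonal/Hausdorff-compactness step to obtain one basis valid for all $h$. You instead push forward the restriction of $\mathcal{L}^n$ to $B_k$ onto $\mathbb{S}^{n-1}$, extract a single weak-$*$ convergent subsequence $\nu_k \overset{*}{\rightharpoonup} \nu$, use Haar averaging over $SO(n)$ to pick a basis $\Xi$ with $\nu(S_{\Xi,0})=0$ (this is the only non-routine step, and it substitutes entirely for Lemma~\ref{skeint}), and close with continuity from above of $\nu$ and the closed-set portmanteau inequality $\limsup_k\nu_k(\overline{S}_{\Xi,\delta_h})\le\nu(\overline{S}_{\Xi,\delta_h})$. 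The one point worth making explicit is that this last inequality holds for arbitrary finite nonnegative Borel measures, not only probability measures (Urysohn approximation of $\mathbbm{1}_F$ from above by continuous functions, plus $\int f\,\di\nu_k\to\int f\,\di\nu$ for $f\in C(\mathbb{S}^{n-1})$), so no normalization of mass is needed. A minor aside: the freedom to pick the rotation in any set of positive Haar measure, which you note in passing, is not actually exploited when Lemma~\ref{ske} is applied in the proof of Theorem~\ref{t:comfk}; the ``prescribed open set $U\subset SO(n)$'' flexibility of Remark~\ref{r:skecon} is used only inside the paper's own proof of this lemma, so your approach also renders that remark unnecessary.
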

 
 \begin{proof}
We claim that for every natural number $N \geq n$, for every $j \in \{0,1,\dotsc,n-1\}$, for every $\varepsilon >0$, and for every open set $U \subset SO(n)$ there exist $\delta >0$ and a family of orthonormal bases $\mathcal{K} := \{\Xi_1,\dotsc,\Xi_N \} \subseteq U$, such that, up to subsequences on $k$, we have 
 \begin{align}
     \label{e:lA41}
     \mathcal{L}^n \Big(v_k^{-1} \Big(\{ x \in \bigcap_{\Xi \in \mathcal{Z}} S_{\Xi,\delta}  :& \, \mathcal{Z} \in \mathcal{K}_{n-j}\}\Big) \Big) \leq \varepsilon,  \ \ k=1,2,\dotsc, \\
     \label{e:lA411}
     \Xi \in U&, \ \ \Xi \in \mathcal{K} \,.
     \end{align}
    %  \label{e:lA4111}
    %  \bigcap_{\Xi \in  \mathcal{Z}} S_{\Xi,\delta_h} &= \emptyset, \ \ \mathcal{Z} \in \mathcal{K}_n, \ \ h \in \mathbb{N}. 
 Clearly the pair $(\delta,\mathcal{K})$ depends on $(N,j,\varepsilon)$, but we do not emphasize this fact. We proceed by induction on $j$. The case $j=0$: given $N \in  \mathbb{N}$, $\varepsilon >0$, and any open set $U \subset SO(n)$, we can make use of Lemma \ref{skeint} and Remark \ref{r:skecon} to find $N$ orthonormal bases $\mathcal{K} =\{ \Xi_1, \dotsc, \Xi_N\} \subseteq U$ such that 
 \[ 
 \bigcap_{\Xi \in \mathcal{Z}} S_{\Xi,0} =\emptyset \qquad \text{for $\mathcal{Z} \in \mathcal{K}_n$}\,.
 %\ \ \mathcal{Z} \in \mathcal{K}_n \ \ \text{and} \ \ \Xi \in U, \ \ \Xi \in \mathcal{K}. 
 \]
 Being $S_{\Xi,0}$ closed sets, there exists $\delta >0$ such that
 \[
 \bigcap_{\Xi \in \mathcal{Z}} S_{\Xi,\delta} =\emptyset \qquad \text{for $\mathcal{Z} \in \mathcal{K}_n$}.
 \]
 Hence, \eqref{e:lA41} is satisfied with $j=0$ and \eqref{e:lA411} holds true.
 
We want to prove the same for $0<j\leq n-1$. For this purpose we fix a natural number $M \geq n$, a parameter $\varepsilon>0$, and an open set $U \subset SO(n)$. By using the induction hypothesis, we may suppose that \eqref{e:lA41} and \eqref{e:lA411} hold true for $j-1$. This means that given $N \geq n$, $\tilde{\varepsilon}>0$ (to be chosen later), we find $\delta > 0$ and orthonormal bases $\mathcal{K}=\{\Xi_1,\dotsc, \Xi_N\}$ such that \eqref{e:lA41} and \eqref{e:lA411} hold true for $j-1$. Choose  $\mathcal{Z} \in \mathcal{K}_M$ and consider the following set 
\begin{equation}
 \label{e:lA48}
 S^{n-j}_{\mathcal{Z},\delta} := \bigcup_{q \in \mathcal{Z}_{n-j}} \bigcap_{\Xi \in q} S_{\Xi,\delta} \,.
 \end{equation}
 which is the union of all the possible $(n-j)$-intersections of sets of the form $S_{\Xi,\delta}$ for $\Xi \in \mathcal{Z}$. 
 
 We recall the following identity valid for any finite family of subsets of $A$, say $(B)_{l=1}^L$, which reads as
 \begin{equation}
     \label{e:lA44}
     \mathcal{L}^n \Big (\bigcup_{l=1}^L B_l \Big) = \sum_{l=1}^L \mathcal{L}^n (B_l)- \sum_{l_1 < l_2}^L \mathcal{L}^n(B_{l_1} \cap B_{l_2}) +  \dotsc +(-1)^{L-1} \mathcal{L}^n \Big (\bigcap_{l=1}^L B_l \Big) \,.
 \end{equation}
Now we partition $\mathcal{K}$ into $N/M$ disjoint subsets (without loss of generality we may choose $N$ to be an integer multiple of $M$) each of which belongs to $\mathcal{K}_M$. We call this partition $\mathcal{P}$. By construction, any $l$-intersection of sets of the form $S^{n-j}_{\mathcal{Z},\delta}$ with $\mathcal{Z} \in \mathcal{P}$ can be written as the union of ${M \choose n- j}^l$ sets each of which, thanks to the fact that (we use that $\mathcal{P}$ is a partition)
\[
Z_1,Z_2 \in \mathcal{P} \Rightarrow Z_1 \cap Z_2 = \emptyset \,,
\] 
is the intersection of at least $n-(j -1)$ different sets of the form $S_{\Xi,\delta}$ with $\Xi \in \mathcal{K}$. Taking this last fact into account, if we replace the sets $B_j$ with $v_k^{-1}(S^{n-j}_{\mathcal{Z},\delta})$ and $L = N/ M$ in identity \eqref{e:lA44}, we obtain 
\begin{equation}
\label{e:lA45}
\mathcal{L}^n \Big(\bigcup_{\mathcal{Z} \in \mathcal{P}} v_k^{-1}(S^{n-j}_{\mathcal{Z},\delta}) \Big) \geq \sum_{\mathcal{Z} \in \mathcal{P}} \mathcal{L}^n( v_k^{-1}(S^{n-j}_{\mathcal{Z},\delta})) - \sum_{l=2}^{N / M} {M \choose n-j}^l \tilde{\varepsilon}, \ \ k=1,2,\dotsc,
\end{equation}
 where we have used the inductive hypothesis \eqref{e:lA41} for $j-1$ to estimate the remaining terms in the right hand-side of \eqref{e:lA44}.
 
Now suppose that for every $\mathcal{Z} \in \mathcal{K}_M$ it holds true for some $k$  
 \begin{equation}
     \label{e:lA42}
     \mathcal{L}^n (v_k^{-1}(S^{n-j}_{\mathcal{Z},\delta})) > \varepsilon \,,
 \end{equation}
 then inequality \eqref{e:lA45} implies
 \begin{equation}
     \label{e:lA46}
     \mathcal{L}^n \Big(\bigcup_{\mathcal{Z} \in \mathcal{P}} v_k^{-1}(S_{\mathcal{Z},\delta}) \Big) > \frac{N}{M} \varepsilon - \sum_{l=2}^{N / M} {M \choose n-j}^l \tilde{\varepsilon} \,.
 \end{equation}
Therefore, if we choose $N$ sufficiently large in such a way that
 \[
 \frac{N}{M}\varepsilon \geq 2 \mathcal{L}^n(A) \,,
 \]
 and $\tilde{\varepsilon}>0$ such that
 \[
 \sum_{l=2}^{N / M} {M \choose n-j}^l \tilde{\varepsilon} < \mathcal{L}^n(A) \,,
 \]
 then \eqref{e:lA46} implies that for every $k$ there exists $\mathcal{Z}^k \in \mathcal{P}$ for which \eqref{e:lA42} does not hold, i.e.,
 \[
 \mathcal{L}^n(v_k^{-1}(S^{n-j}_{\mathcal{Z}^k,\delta})) \leq \varepsilon, \ \ k=1,2,,\dotsc,
 \]
 where we have used that $B_k$, the domain of $v_k$, is contained in $A$. Being $\mathcal{P}$ a finite family, we may suppose that, up to subsequences on $k$, we find a common $\mathcal{Z} \in \mathcal{P}$ for which  
 \begin{equation}
     \label{e:lA47}
     \mathcal{L}^n(v_k^{-1}(S^{n-j}_{\mathcal{Z},\delta})) \leq \varepsilon, \ \ k=1,2,,\dotsc.
 \end{equation}
Taking into account the definition of $S^{n-j}_{\mathcal{Z},\delta}$ \eqref{e:lA48}, formula \eqref{e:lA47} gives our claim for $j$. Finally, by induction, this implies the validity of our claim for every $j \in \{0,\dotsc,n \}$.

Now we prove the lemma. For $j=n-1$ the claim says in particular that we find an orthonormal basis $\Xi_0$ and $\delta_0 >0$ such that, up to pass to a subsequence on $k$, we have
\[
\mathcal{L}^n(v_k^{-1}(S_{\Xi_0,\delta_0})) \leq \epsilon_0, \ \ k=1,2,\dotsc .
\]
Notice that by using a continuity argument, we find a neighborhood $U_0$ of $\Xi_0$ in $SO(n)$ such that
\[
S_{\Xi,\delta_0/2} \Subset S_{\Xi_0,\delta}, \ \ \Xi \in U_0 \,.
\]
By applying again the claim we find an orthonormal basis $\Xi_1 \in U_0$ and $\tilde{\delta}_1 >0$ such that, up to pass to a further subsequence on $k$, we have
\[
\mathcal{L}^n(v_k^{-1}(S_{\Xi_1,\tilde{\delta}_1})) \leq \epsilon_1, \ \ k=1,2,\dotsc .
\]
Hence if we set $\delta_1 := \min\{\tilde{\delta}_1,\delta_0/2 \}$ we obtain as well
\begin{align*}
\mathcal{L}^n(v_k^{-1}(S_{\Xi_1,\delta_1})) &\leq \epsilon_1, \ \ k=1,2,\dotsc, \\
S_{\Xi_1,\delta_1} &\Subset S_{\Xi_0,\delta_0} \,.
\end{align*}
Proceeding again by induction, we find for every $h =1,2,\dotsc$ an orthonormal basis $\Xi_h$, $\delta_h >0$, and a subsequence $(k^h_{\ell})_{\ell}$, such that
\begin{align*}
\mathcal{L}^n(v_{k_{\ell}^h}^{-1}(S_{\Xi_h,\delta_h})) &\leq \epsilon_h, \ \ \ell=1,2,\dotsc, \\
S_{\Xi_h,\delta_h} &\Subset S_{\Xi_{h-1},\delta_{h-1}}\,, \\
(k^h_{\ell})_{\ell} &\subset (k^{h-1}_{\ell})_{\ell}\,.
\end{align*}
If we denote with abuse of notation the diagonal sequence $(k_h^h)_h$ simply as $k$, then we can find a map $\phi \colon \mathbb{N} \to \mathbb{N}$ such that
\begin{align}
\label{e:lA410}
\mathcal{L}^n(v_k^{-1}(S_{\Xi_h,\delta_h})) &\leq \epsilon_h, \ \ k \geq \phi(h) \\
\label{e:lA49}
S_{\Xi_h,\delta_h} &\Subset S_{\Xi_{h-1},\delta_{h-1}}\,.
\end{align}
Being the family $(S_{\Xi_h,0})_h$ made of compact subsets of $\mathbb{S}^{n-1}$, then it is relatively compact with respect to the Hausdorff distance. This means that, up to a subsequence on $h$, we find an orthonormal basis $\Xi$ such that
\[
\lim_{h \to \infty} \text{dist}_{\mathcal{H}}(S_{\Xi_h,0},S_{\Xi,0}) =0\,.
\]
By using \eqref{e:lA49} and the fact that $S_{\Xi_h,\delta_h}$ are relatively open subsets of $\mathbb{S}^{n-1}$, this last convergence tells us that for every $h$ the compact inclusion $S_{\Xi,0} \Subset S_{\Xi_h,\delta_h}$ holds true. But this implies that up to defining suitable $\delta'_h >0$ with $\delta'_h \leq \delta_h$, we can write
\[
S_{\Xi,\delta'_h} \Subset S_{\Xi_h,\delta_h}, \ \ h \in \mathbb{N}.
\]
Finally, with abuse of notation we set  $\delta_h := \delta'_h$ for every $h$. Then \eqref{e:lA410} implies 
\[
\mathcal{L}^n(v_k^{-1}(S_{\Xi,\delta_h})) \leq \epsilon_h, \ \ k \geq \phi(h), \ \ h \in \mathbb{N}.
\]
This gives the desired result.
 \end{proof}

\begin{remark}
\label{r:bound}
Given $U \subset \mathbb{R}^n$, $u \in GBD(U)$, and $\sigma \geq 1$, we have that  
\begin{equation}
\mathcal{H}^{n-1}(J^\sigma_u)\leq 4n\hat{\mu}_u(U) \,.
\end{equation}
Indeed, given $\epsilon>0$, one can consider a partition of $\mathbb{S}^{n-1}$ into a finite family of measurable sets $\{S_1,\dotsc,S_M\}$ such that for every $m=1,\dotsc,M$ there exists an orthonormal basis $\Xi_m = \{\xi^m_1,\dotsc,\xi^m_n\}$ with $\xi\cdot \xi^m_i \geq 1/4$ for every $\xi \in S_m$ and for every $i,j\in \{1,\dotsc,n\}$ and $m \in \{1,\dotsc,M\}$. Consider then the partition of $J^\sigma_u$ given by $\{B_1,\dotsc,B_M\}$ where $B_m:=\{x \in J^\sigma_u : \, [u(x)]/|[u(x)]| \in S_m\}$. We then have
\[
\begin{split}
\mathcal{H}^{n-1}(J^\sigma_u) &\leq \sum_{m=1}^M\sum_{\xi \in \Xi_m} \int_{B_m} |\nu_{u} \cdot \xi| \, \di \mathcal{H}^{n-1} = \sum_{m=1}^M\sum_{\xi \in \Xi_m}\int_{\Pi_{\xi}} \mathcal{H}^0( (B_m)_y^{\xi})\, \di \HH^{n-1}(y) \\
&= \sum_{m=1}^M\sum_{\xi \in \Xi_m}\int_{\Pi_{\xi}}  \mathcal{H}^0(J^1_{4\hat{u}_y^{\xi}} \cap (B_m)_y^{\xi})\, \di\HH^{n-1}(y) = \sum_{m=1}^M\sum_{\xi \in \Xi_m} \hat{\mu}^\xi_{4u}(B_m)\\
&\leq n\sum_{m=1}^M \hat{\mu}_{4u}(B_m)  \leq n\hat{\mu}_{4u}(U) \leq 4n \mu_{u}(U)\,,
\end{split}
\]
where we have used that $|[4\hat{u}_y^\xi](t)| \geq 1$ for every $t \in J_{4\hat{u}_y^\xi}\cap (B_m)_y^{\xi} $ for $\mathcal{H}^{n-1}$-a.e. $y \in \Pi^{\xi}$ with $\xi \in \Xi_m$.
\end{remark}
 
\begin{remark}
\label{r:sigma}
 Let $U \subset \mathbb{R}^n$ and $u \in GBD(U)$. Given $\xi \in \mathbb{S}^{n-1}$ and $\sigma>1$ if we introduce the map $\hat{\mu}^\xi_\sigma \colon \mathcal{B}(U) \to \overline{\mathbb{R}}$ as
 \begin{equation}
 \hat{\mu}^\xi_\sigma(B) := \int_{\Pi^\xi} |D\hat{u}_y^\xi|(B_y^\xi \setminus J_{\hat{u}_y^\xi}^\sigma) + \mathcal{H}^0(B_y^\xi \cap J_{\hat{u}_y^\xi}^\sigma) \, \di \HH^{n-1}(y), \ \ B \in \mathcal{B}(U)\,,
 \end{equation}
 then we have $\hat{\mu}^\xi_\sigma \in \mathcal{M}^+_b(U)$. More precisely, for $\mathcal{H}^{n-1}$-a.e. $y \in \Pi^\xi$ we have 
 \[
 \begin{split}
 &|D\hat{u}_y^\xi|(B \setminus J_{\hat{u}_y^\xi}^\sigma) + \mathcal{H}^0(B \cap J_{\hat{u}_y^\xi}^\sigma)\\
 &\leq |D\hat{u}_y^\xi|(B \setminus J_{\hat{u}_y^\xi}^1) + \mathcal{H}^0(B \cap J_{\hat{u}_y^\xi}^1) + (\sigma-1)\mathcal{H}^0(B \cap (J_{\hat{u}_y^\xi}^1 \setminus J_{\hat{u}_y^\xi}^\sigma)), \ \ B \in \mathcal{B}(U_y^\xi),
 \end{split}
 \]
 (notice that for $\mathcal{H}^{n-1}$-a.e.~$y$ the right hand side is a finite measure thanks to \UUU Remark~\ref{r:bound}). \EEE By using the inclusion $J^1_{\hat{v}^\xi_y} \subset (J^1_v)^\xi_y$, valid for every $v \in GBD(U)$ for every $\xi \in \mathbb{S}^{n-1}$ for $\mathcal{H}^{n-1}$-a.e.~$y \in \Pi^\xi$, we deduce
 \begin{equation}
 \label{e:3.23}
     \hat{\mu}^\xi_\sigma(B) \leq \hat{\mu}^\xi(B) + (\sigma-1) \int_{B \cap J^1_u} |\nu_{u} \cdot \xi| \, \di \mathcal{H}^{n-1}, \ \ B \in \mathcal{B}(U)\,.
 \end{equation}
 Finally, \UUU Remark \ref{r:bound} \EEE and the definition of~$\hat{\mu}^{\xi}$ (see~\cite[Definition~4.10]{dal}) imply that the right-hand side of~\eqref{e:3.23} is a finite measure, and so is~$\hat{\mu}^{\xi}_{\sigma}$.
\end{remark}

 We are now in a position to prove Theorem~\ref{t:comfk}.

\begin{proof}[Proof of Theorem~\ref{t:comfk}]
Let $\tau(t) := \arctan{(t)}$. We claim that for every $i\in \{1,\dotsc, n\}$ the family $(\tau(u_k \cdot e_i))_k$ is relatively compact in $L^1(U)$, where $\{e_i\}_{i=1}^{n}$ denotes a suitable orthonormal basis of~$\mathbb{R}^n$. Now given $\epsilon_h \searrow 0$, by using Lemma \ref{ske}, there exists $\delta_h \searrow 0$ such that if we define $B_k := \{|u_{k}|\neq 0\}$ and $v_{k} \colon B_k \to \mathbb{S}^{n-1}$ as $v_{k} := u_{k}/|u_{k}|$, then
\[
\mathcal{L}^n \big( v^{-1}_{k}(S_{\Xi,\delta_h}) \big) \leq \epsilon_h \qquad  \text{ for every }k \geq \phi(h)
\]
for a suitable orthonormal basis~$\Xi$ and a suitable map $\phi \colon \mathbb{N} \to \mathbb{N}$. 

In order to simplify the notation, let us denote $\Xi = \{ e_1, \dotsc, e_n\}$. Fix $i \in \{1,\dotsc,n\}$ and set  $\xi^t_j:= \frac{\sqrt{t}}{\sqrt{t+t^2}}e_i + \frac{t}{\sqrt{t + t^2}}e_j \in \mathbb{S}^{n-1}$ for every $j \neq i$ \UUU and $t >0$. \EEE Notice that
\begin{equation}
\label{asy}
|\xi^t_j -e_i| \leq \sqrt{2t} \qquad \text{and} \qquad \bigg|\frac{\xi^t_j -e_i}{|\xi^t_j -e_i|} - e_j \bigg| \leq \sqrt{2t}  \,.
\end{equation}
We define $U_t := \{x \in U :\, \text{dist}(\partial U , x) > t \}$. Since we want to apply Fr\'echet-Kolmogorov Theorem, we have to estimate for $x \in U_t$ 
\begin{equation*}\label{e:A1}
\begin{split}
|\tau(u_{k}(x + t e_j) \cdot e_i) & - \tau(u_{k}(x)\cdot e_i)| 
\\
&
\leq |\tau(u_{k}(x + t e_j) \cdot e_i ) - \tau(u_{k}(x + t e_j) \cdot \xi^t_j)| 
\\
&
\qquad + |\tau(u_{k}(x + t e_j) \cdot \xi^t_j) - \tau(u_{k}(x - \sqrt{t} e_i) \cdot \xi^t_j)|  
\\
&
\qquad + |\tau(u_{k}(x - \sqrt{t} e_i) \cdot \xi^t_j) -\tau(u_{k}(x - \sqrt{t} e_i) \cdot e_i) | 
\\
&
\qquad + |\tau(u_{k}(x - \sqrt{t} e_i) \cdot e_i) - \tau(u_{k}(x) \cdot e_i)| \,.
\end{split}
\end{equation*}

 Now notice that by definition of~$S_{\Xi, \delta_{h}}$ (see Definition~\ref{d:3.1}), there exists a positive constant $c = c(\delta_h)$ such that for every $x \in U \setminus v^{-1}_{k}(S_{\Xi,\delta_h/2})$ and every $i,j \in \{1,\dotsc, n\}$
\begin{equation}
\label{uniquo}
|u_{k}(x) \cdot e_i| \geq c(\delta_h) \, |u_{k}(x)\cdot e_j|  \qquad \text{for every } k \text{ and }h\,.
\end{equation}
Moreover, by taking into account \eqref{asy}, we deduce the existence of a dimensional parameter $\overline{t} >0$ such that
\begin{align}
   \label{eqsign1}
   |z \cdot \xi_j^t|^2 \geq 2^{-1}|z \cdot e_i|^2 &\qquad  t \leq \overline{t}, \  z \in \R^{n},  \ i,j \in \{1,\dotsc,n\} \\
    \label{eqsign2}
  \bigg|z \cdot \frac{\xi_j^t -e_i}{|\xi_j^t -e_i|}  \bigg| \leq 2 |z \cdot e_j| &\qquad   t \leq \overline{t}, \  z \in \R^{n},  \ i,j \in \{1,\dotsc,n\}.
\end{align}

% Moreover, there exists $t_{\delta_h} >0$ sufficiently small such that if $x \in U \setminus v^{-1}_{\rho_k}(S_{\Xi, \delta_{h/2}})$ and $|u_{\rho_k}(x)| \neq 0$, then for every $i,j \in \{1,\dotsc, n\}$ we have
% \begin{equation}
% \label{eqsign}
% \text{sign}(u_{\rho_k}(x) \cdot e_i) = \text{sign}(u_{\rho_k}(x) \cdot \xi_j^t) \qquad  \text{for } t \leq t_{\delta_h} \text{ and } k \geq h \,.
% \end{equation}

For every $t \leq \overline{t}$, if $x \in U_t$ and $x \notin v^{-1}_{k}(S_{\Xi,\delta_h/2}) - te_j$, by using~\eqref{asy} and~\eqref{uniquo}-\eqref{eqsign2}, we can write
\begin{align}\label{e:A2}
& |\tau(u_{k}(x + t e_j) \cdot e_i ) - \tau(u_{k}(x + t e_j) \cdot \xi^t_j)| = \bigg|\int_{u_{k}(x+te_j)\cdot e_i}^{u_{k}(x+te_j)\cdot \xi_j^t} \frac{\di s}{1+ s^2}\bigg| 
\\
&
\leq \max\bigg\{\frac{\sqrt{2t}}{1+|u_{k}(x+te_j)\cdot e_i|^2},\frac{\sqrt{2t}}{1+|u_{k}(x+te_j)\cdot \xi_j^t|^2} \bigg\} \bigg|u_{k}(x+te_j)\cdot \frac{\xi_j^t - e_i}{|\xi_j^t - e_i|}\bigg| \nonumber 
\\
&
\leq \max\bigg\{\frac{\sqrt{2t}}{1+|u_{k}(x+te_j)\cdot e_i|^2},\frac{\sqrt{2t}}{1+2^{-1}|u_{k}(x+te_j)\cdot e_i|^2} \bigg\} \bigg|u_{k}(x+te_j)\cdot \frac{\xi_j^t - e_i}{|\xi_j^t - e_i|}\bigg| \nonumber 
\\
& 
\leq\frac{2\sqrt{2t}}{1+2^{-1}|u_{k}(x+te_j)\cdot e_i|^2}
  |u_{k}(x+te_j)\cdot e_j|
\leq\frac{2\sqrt{t}}{c(\delta_h)}
  \nonumber
\end{align}

and analogously if $x \in U_t$ and $x \notin v^{-1}_{k}(S_{\Xi,\delta_h/2}) + \sqrt{t}e_i$
\begin{equation}\label{e:A3}
|\tau(u_{k}(x - \sqrt{t} e_i) \cdot \xi^t_j) -\tau(u_{k}(x - \sqrt{t} e_i) \cdot e_i) | \leq \frac{2\sqrt{t}}{c(\delta_h)} \,.
\end{equation}

Hence, from~\eqref{e:A2} and~\eqref{e:A3} we infer that for every $t \leq \overline{t}$ 
 \[
 \int_{U_{t}} |\tau(u_{k}(x + t e_j) \cdot e_i ) - \tau(u_{k}(x + t e_j) \cdot \xi^t_j)| \, \di x \leq |U| \frac{2\sqrt{t}}{c(\delta_h)} + \UUU \pi \epsilon_h \,, \EEE
 \]
and 
\[
 \int_{U_{t}}  |\tau(u_{k}(x - \sqrt{t} e_i) \cdot e_i ) - \tau(u_{k}(x - \sqrt{t} e_i) \cdot \xi^t_j)| \, \di x \leq |U| \frac{2\sqrt{t}}{c(\delta_h)} + \UUU \pi \epsilon_h \,. \EEE
\]
 Moreover, setting $s_{t} := \sqrt{t + t^{2}}$ we can write
\begin{align}
\label{e:100}
  \int_{U_t} & |\tau(u_{k}(x + t e_j) \cdot \xi_j^t ) - \tau(u_{k}(x - \sqrt{t} e_i) \cdot \xi^t_j)| \, \di x\\
  &= \int_{U_t} |\tau(u_{k}(x - \sqrt{t} e_i + s_t \xi_j^t) \cdot \xi_j^t ) - \tau(u_{k}(x - \sqrt{t} e_i) \cdot \xi^t_j)| \, \di x \nonumber\\
  &= \int_{U_t + \sqrt{t}e_i} |\tau(u_{k}(x + s_t \xi_j^t) \cdot \xi_j^t ) - \tau(u_{k}(x) \cdot \xi^t_j)| \, \di x \nonumber\\
  &\leq \int_{\Pi_{\xi^t_j}}\bigg(\int_{(U_t + \sqrt{t}e_i)_y^{\xi^t_j}} | D\tau(\hat{u}_y^{\xi^t_j}) | ((s,s+s_t))\, \di s\bigg)\di \HH^{n-1}(y) \nonumber\,.
  \end{align}
By a mollification argument, we have that
\[
\begin{split}
   \int_{\Pi_{\xi^t_j}}  \bigg(\int_{(U_t + \sqrt{t}e_i)_y^{\xi^t_j}}  & | D\tau(\hat{u}_y^{\xi^t_j}) | ((s,s+s_t))\, \di s\bigg)\di \HH^{n-1}(y) 
  \\
  &
  = \int_{\Pi_{\xi^t_j}}\bigg(\int_0^{s_t} | D\tau(\hat{u}_y^{\xi^t_j}) | ((U_t+\sqrt{t}e_i)_y^{\xi^t_j}+\lambda)\, \di \lambda\bigg)\di \HH^{n-1}(y) \,,
  \end{split}
\]
so that we obtain from~\eqref{e:100} that 
\begin{align*}
 \int_{U_t}  |\tau(u_{k}(x + t e_j) \cdot \xi_j^t ) &  - \tau(u_{k}(x - \sqrt{t} e_i) \cdot \xi^t_j)| \, \di x\\
& \leq \int_{\Pi_{\xi^t_j}}\bigg(\int_0^{s_t} | D\tau(\hat{u}_y^{\xi^t_j}) | ((U_t+\sqrt{t}e_i)_y^{\xi^t_j}+\lambda)\, \di \lambda\bigg)\di \HH^{n-1}(y) \\
  &\leq\int_0^{s_t}\bigg( \int_{\Pi_{\xi^t_j}} | D\tau(\hat{u}_y^{\xi^t_j}) | (U_y^{\xi^t_j})\, \di \HH^{n-1}(y) \bigg) \di \lambda \leq \pi s_t  \hat{\mu}_{u_k}(U) \,.
\end{align*}
Analogously,
\[
\int_{U_t} |\tau(u_{k}(x - \sqrt{t} e_i) \cdot e_i ) - \tau(u_{k}(x) \cdot e_i)| \, \di x \leq \pi  \sqrt{t}  \hat{\mu}_{u_k}(U) \,.
\]
 
 Summarizing, we have shown that if $t_h$  is such that $t_h \in(0,  \overline{t}]$ and
 \[
|U| \frac{2\sqrt{t_h}}{c(\delta_h)} \leq \epsilon_h  \qquad \text{and} \qquad \ \pi  s_{t_h}  \hat{\mu}_{u_k}(U) \leq \epsilon_h \,,
 \]
then for every $t \leq t_h$ we have for every $e_j \in \Xi$
 \[
 \int_{U_t} |\tau(u_{k}(x + t e_j) \cdot e_i) - \tau(u_{k}(x)\cdot e_i)| \, \di x \leq \UUU 10 \epsilon_h \EEE \qquad \text{for every $k \geq \phi(h)$}\,.
 \]
 As a consequence, there exists a positive constant~$L= L(n)$ such that
\[
 \int_{U_t} |\tau(u_{k}(x + t \xi) \cdot e_i) - \tau(u_{k}(x)\cdot e_i)| \, \di x \leq L(n)\epsilon_h \qquad \xi \in \mathbb{S}^{n-1}, \ \ k \geq \phi(h), \ \ t \leq t_h.
 \]
Since the index $i$ chosen at the beginning was arbitrary, this means also that if we consider the diffeomorphism $\psi \colon \mathbb{R}^n \to (-\pi/2,\pi/2)^{n}$ defined by $\psi(x):= (\tau(x_1), \dotsc, \tau(x_n))$, then
\[
\int_{U_t} |\psi(u_{k}(x + t\xi)) - \psi(u_{k}(x))| \, \di x \leq L'(n)\epsilon_h, \qquad \xi \in \mathbb{S}^{n-1}, \ \ k \geq \phi(h), \ \ t \leq t_h.
\]

 By Fr\'echet-Kolmogorov Theorem, this last inequality implies that the sequence~$\psi(u_{k})$ is relatively compact in~$L^1(U; \R^{n})$. Hence, we can pass to another subsequence, still denoted by~$\psi(u_{k})$, such that $\psi(u_{k}) \to v$ as $k \to \infty$ strongly in~$L^1(U; \R^{n})$. By eventually passing through another subsequence, we may suppose $\psi(u_{k}(x)) \to v(x)$ a.e.~in~$U$ as $k \to \infty$. As a consequence, there exists a measurable $u \colon U \to \overline{\mathbb{R}}$ such that $u_{k}(x) \to u(x)$ as $k \to \infty$ a.e.~in~$U \setminus \{x \in U: \, v(x) \in \partial (-\frac{\pi}{2},\frac{\pi}{2})^n\}$. Moreover, $|u_{k}(x)| \to +\infty$ if and only if for at least one index~$i$, $u_{k}(x) \cdot e_i \to \pm\infty$ (clearly $\tau(u \cdot e_i) = v_i$) or equivalently if and only if $x \in \{x \in U : \, v(x) \in \partial (-\frac{\pi}{2},\frac{\pi}{2})^n\}$. Thus, we obtain that $u_{k} \to u$ a.e.~in $U \setminus A$ as $k \to \infty$.
 
To show that $A := \{x \in U : \, |u_{k}(x)| \to +\infty  \}$ has finite perimeter the argument follows that in~\cite{cris}. We give a sketch of the proof.  
 
It is easy to check that for $\mathcal{H}^{n-1}$-a.e.~$\xi \in \mathbb{S}^{n-1}$ it holds true
 \begin{equation}
 \label{e:compatibility}
 x \in A  \ \ \text{ if and only if } \ \  \lim_{k \to \infty}\tau(u_{k}(x) \cdot \xi) = \pm \frac{\pi}{2}\, , \ \ \text{ for a.e. }x \in U\,.
 \end{equation}
Now fix $\sigma \geq 1$. First of all using also \eqref{e:compatibility} we can follow a standard measure theoretic argument which shows that we can extract a subsequence, still denoted as $(u_k)_k$, such that for $\mathcal{H}^{n-1}$-a.e. $\xi \in \mathbb{S}^{n-1}$ for $\mathcal{H}^{n-1}$-a.e.~$y \in \Pi^\xi$ it holds true
\begin{equation}
\label{e:convae}
    \tau((\hat{u}_k)_y^\xi) \to v^\xi_y := \begin{cases}
     \tau(\hat{u}_y^\xi) &\text{ on } U^\xi_y \setminus A^\xi_y\\
     \pm \frac{\pi}{2} &\text{ on }A^\xi_y,
    \end{cases}
    \ \ \text{ in }L^1(U^\xi_y) \,.
\end{equation}
Fix $\epsilon >0$. By Fatou Lemma and Remarks~\ref{r:bound} and~\ref{r:sigma} we estimate
\begin{align}
\label{e:lsc1}
&\int_{\Pi_\xi} \liminf_{k\to \infty} \, [\epsilon\,|D(\hat{u}_k)_y^\xi|(U_y^\xi \setminus J_{(\hat{u}_k)_y^\xi}^\sigma) + \mathcal{H}^0(U_y^\xi \cap J_{(\hat{u}_k)_y^\xi}^\sigma)] \, \di \HH^{n-1}(y)\\
&\leq\int_{\Pi_\xi} \liminf_{k\to \infty} \,[\epsilon\,|D(\hat{u}_k)_y^\xi|(U_y^\xi \setminus J_{(\hat{u}_k)_y^\xi}^\sigma) + \mathcal{H}^0(U_y^\xi \cap (J^\sigma_{u_k})_y^\xi)] \, \di \HH^{n-1}(y) \nonumber\\
&\leq \limsup_{k \to \infty}\bigg( \epsilon \,\hat{\mu}^\xi_{u_k}(U) + \epsilon(\sigma-1)\int_{U \cap J^1_{u_k}} \!\!\!\! |\nu_{u_k} \cdot \xi| \, \di \mathcal{H}^{n-1}\bigg) + \liminf_{k \to \infty} \int_{U \cap J^\sigma_{u_k}} \!\!\!\! |\nu_{u_k} \cdot \xi| \, \di \mathcal{H}^{n-1} \nonumber \\
& \leq \epsilon\sup_{k \in \mathbb{N}} \, (1+4n(\sigma-1))\hat{\mu}_{u_k}(U) + \liminf_{k \to \infty} \int_{U \cap J^\sigma_{u_k}}\!\!\!\! |\nu_{u_k} \cdot \xi| \, \di \mathcal{H}^{n-1} < +\infty\,. \nonumber
\end{align}
 For $\mathcal{H}^{n-1}$-a.e.~$y$ we can thus consider a subsequence depending on~$y$ but still denoted by~$(u_k)_k$ such that
 \begin{equation}
 \label{e:slicebound}
 \sup_{k \in \mathbb{N}} \, \epsilon \, |D(\hat{u}_k)_y^\xi|(U_y^\xi \setminus J_{(\hat{u}_k)_y^\xi}^\sigma) + \mathcal{H}^0(U_y^\xi \cap J_{(\hat{u}_k)_y^\xi}^\sigma) < +\infty \,.
  \end{equation}
 
 Now we study the behavior of a sequence of one dimensional functions satisfying~\eqref{e:slicebound}. Let $(a,b) \subset \mathbb{R}$ be a non-empty open interval and suppose that $(f_k)_k$ is a sequence in $BV_{\text{loc}}((a,b))$ satisfying
 \begin{equation}
 \label{e:onedimbound}
 \sup_{k \in \mathbb{N}} |Df_k|((a,b) \setminus J_{f_k}^\sigma) + \mathcal{H}^0(J_{f_k}^\sigma) < \infty.
 \end{equation}
 We write $f_k = f^1_k + f^2_k$ for $f^1_k,f^2_k \colon (a,b) \to \mathbb{R}$ defined as
 \[
 f^1_k(t) :=  Df_k((a,t) \setminus J_{f_k}^\sigma) \ \ \text{and} \ \ f^2_k(t):=f_k(a) + Df_k((a,t) \cap J_{f_k}^\sigma). \]
 We study the convergence of $f^1_k$ and $f^2_k$ separately. 
 
 Inequality \eqref{e:onedimbound} tells us that up to extract a further not relabelled subsequence 
 \begin{equation}
 \label{e:convbv}
 f^1_k \to f^1 \text{ pointwise a.e. for some } f^1 \in BV((a,b)) \text{ as } k \to \infty.
 \end{equation}
 As for~$(f^2_k)_k$, by inequality \eqref{e:onedimbound} we may suppose that, up to extract a further not relabelled subsequence, there exists a finite set $J \subset [a,b]$ such that
 \begin{align}
& \mathcal{H}^0(J) \leq \sup_{k \in \mathbb{N}} \,\mathcal{H}^0(J_{f_k}^{\sigma})\,, \label{e:cond-1}\\
     & J_{f_k}^{\sigma} \to J \qquad \text{in Hausdorff distance as $k \to \infty$}.\label{e:cond-2}
 \end{align}
 Then, \eqref{e:cond-1}--\eqref{e:cond-2} together with the fact that by construction~$f_k^2$ is a piecewise constant function allows us to deduce that any pointwise limit function $f^2$ for $(f_k^2)_k$ must be of the form
  \begin{equation*}
     % \label{e:piecewisecon}
      f^2(t) = \sum_{l=1}^{M} \alpha_l \mathbbm{1}_{(a_l,a_{l+1})}(t) \qquad \text{for $t \in (a,b)$},
  \end{equation*}
  for a suitable $M \leq \mathcal{H}^0(J \cap (a,b)) +1$, for suitable $\alpha_l \in \mathbb{R} \cup \{\pm\infty\}$ with $\alpha_l \neq \alpha_{l+1}$, and for suitable $a_l \in J$ with $a_l < a_{l+1}$ and $a_1=a$, $a_{\mathcal{H}^0(J \cap (a,b)) +2}=b$. Up to extract a further not relabelled subsequence we may suppose $f_k^2 \to f^2$ pointwise a.e.. Now if $\alpha_l \in \{\pm\infty\}$ and $l\neq 1$ and $l\neq \mathcal{H}^0(J \cap (a,b)) +1$, we set 
  \begin{align*}
  & T_{l,k} := \{t \in J^\sigma_{f^2_k} : \, |t-a_l| \leq 1/2 \min_{t_1,t_2 \in J}|t_1 -t_2| \}\,,\\
  &  T_{l+1,k} := \{t \in J^\sigma_{f^2_k} : \, |t-a_{l+1}| \leq 1/2 \min_{t_1,t_2 \in J}|t_1 -t_2| \}\,,
  \end{align*}
  while if $l= 1$ we set
  \[
  T_{l,k} := \{t \in J^\sigma_{f^2_k} : \, |t-a_{l+1}| \leq 1/2 \min_{t_1,t_2 \in J}|t_1 -t_2| \} \,,
  \]
  and if $l= M$  we set
  \[
  T_{l,k} := \{t \in J^\sigma_{f^2_k} :\, |t-a_l| \leq 1/2 \min_{t_1,t_2 \in J}|t_1 -t_2| \} \,.
  \]
  By~\eqref{e:cond-2} we have $T_{l,k} \neq \emptyset$ for every but sufficiently large~$k$ and thanks to the definition of~$T_{l,k}$ any sequence $(t_{l,k})_k$ with $t_{l,k} \in T_{l,k}$ is such that $t_{l,k} \to \alpha_l$ as $k \to \infty$. We claim that for every $l \in \{1,\dotsc,M \}$ there exists one of such sequences~$(t_{l,k})_k$ such that 
  \begin{equation}
  \label{e:notbound}
  \lim_{k \to \infty} |[f^2_k(t_{l,k})]|= + \infty\,. 
  \end{equation}
  Suppose by contradiction that there exists~$l$ and a subsequence~$k_j$ such that
  \[
  \sup_{j \in \mathbb{N}}\, \max_{t \in T_{l,k_{j}}} |[f^2_{k_{j}} (t)]| < + \infty\,.
  \]
 Then, we are in the following situation: we choose one of the endpoints $a_l$ or $a_{l+1}$, for example $a_l$, (in the case $l=1$ we choose $a_{l+1}$ and in the case $l=M$ we choose $a_l$) and the sequence $v_j :=f^2_{k_j} \restr (a_l - \frac{1}{2} \min_{t_1,t_2 \in J}|t_1 -t_2|,a_l + \frac{1}{2} \min_{t_1,t_2 \in J}|t_1 -t_2|)$ satisfies
 \begin{align*}
  & \text{$v_j$ is piecewise constant,}\\
  &   J_{v_j}= T_{ l, k_j }\, \quad \text{and}\quad \text{$J_{v_j} \to a_l$ in Hausdorff distance as $j \to \infty$,}\\
  &  \sup_{j \in \mathbb{N}} \, \mathcal{H}^0(T_{ l, k_j }) < + \infty, \qquad  \sup_{j \in \mathbb{N}} \, \max_{t \in J_{v_j}} \, |[v_j](t)| < + \infty\,.
 \end{align*}
 It is easy to see that the previous conditions are in contradiction with the fact that $f^2 \restr (a_l-1/2 \min_{t_1,t_2 \in J}|t_1 -t_2|,a_l+1/2 \min_{t_1,t_2 \in J}|t_1 -t_2|)$, i.e. the pointwise limit of~$v_j$, is such that~$f^2$ has a non finite jump point at $a_l$. This proves our claim. Our claim implies in particular that, being $(f^1_k)_k$ equibounded, then the sequence~$t_{l, k}$ satisfying~\eqref{e:notbound} is actually contained for every but sufficiently large~$k$ in~$J^\sigma_{f_k}$ (roughly speaking the jumps of~$f^1_k$ cannot compensate a non-bounded sequence of jumps of $f^2_k$). Clearly, being the interval $\{t  : \, |t-a_l| < \frac{1}{2} \min_{t_1,t_2 \in J} |t_1-t_2|\}$ pairwise disjoints for $l \in \{2,\dotsc M \}$ (we are avoiding the end points $a$ and $b$), then we have actually proved the following lower semi-continuity property
 \begin{equation}
     \label{e:onedimlow}
     \mathcal{H}^0(\partial^*\{f=\pm\infty\})=\mathcal{H}^0(\{t \in (a,b) \cap J_f : \, |[f(t)]|=\infty\}) \leq \liminf_{k \to \infty} \mathcal{H}^0(J^\sigma_{f_k})\,,
 \end{equation}
 where $f:= f_1+f_2$. Notice that the set~$J_f$ is well defined since $f$ is the sum of a (bounded) $BV$ function and a piecewise constant function which might assume values~$\pm \infty$, but jumps only at finitely many points.
 
 Having this in mind we can come back to our original problem. Fix $\xi \in \mathbb{S}^{n-1}$ satisfying \eqref{e:convae}. Given $y \in \Pi^\xi$ for which \eqref{e:convae} and \eqref{e:slicebound} hold true we can pass through a not relabelled subsequence (depending on $y$) for which the following liminf
 \[
 \liminf_{k\to \infty}\,  [\epsilon \, |D(\hat{u}_k)_y^\xi|(U_y^\xi \setminus J_{(\hat{u}_k)_y^\xi}^\sigma) + \mathcal{H}^0(U_y^\xi \cap J_{(\hat{u}_k)_y^\xi}^\sigma)]
 \]
 is actually a limit. Passing through a further not relabelled subsequence, we may also suppose that \eqref{e:onedimlow} holds true in each connected component of $U_y^\xi$, i.e.
 \begin{equation*}
     \mathcal{H}^0(\partial^*\{v_y^\xi =\pm\pi/2\})\leq \liminf_{k \to \infty} \, \mathcal{H}^0(J^\sigma_{(\hat{u}_k)_y^\xi})\,.
 \end{equation*}
 Notice that $|v_y^\xi| < \pi/2 $ a.e.~on $U_y^\xi \setminus A_y^\xi$, hence $\{v_y^\xi =\pm\pi/2\}= A_y^\xi$ a.e.~and so $\partial^*\{v_y^\xi =\pm\pi/2\}= \partial^*A_y^\xi$. In particular
 \begin{equation}
 \label{e:sliceA}
    \mathcal{H}^0(\partial^*A_y^\xi)\leq \liminf_{k \to \infty} \mathcal{H}^0(J^\sigma_{(\hat{u}_k)_y^\xi}) 
 \end{equation}
 Therefore, by passing through suitable subsequences, each depending on $y$, when computing the liminf inside the left-hand side integral of \eqref{e:lsc1} and by using \eqref{e:sliceA} we infer
 \begin{align}
     \label{e:lsc2}
\int_{\Pi^\xi} &\mathcal{H}^0(\partial^*A_y^\xi)\, \di \HH^{n-1}(y)
\\
&
\leq \epsilon\, \sup_{k \in \mathbb{N}} \, (1+4n(\sigma-1))\hat{\mu}_{u_k}(U) + \liminf_{k \to \infty} \int_{U \cap J^\sigma_{u_k}} |\nu_{u_k} \cdot \xi| \, \di \mathcal{H}^{n-1}\, \nonumber.
 \end{align}
 The arbitrariness of $\xi$ implies that~\eqref{e:lsc2} holds for $\mathcal{H}^{n-1}$-a.e.~$\xi \in \mathbb{S}^{n-1}$. Hence, we deduce that~$A$ has finite perimeter in~$U$. In addition, by taking the integral on~$\mathbb{S}^{n-1}$ on both sides of~\eqref{e:lsc2} we infer
 \begin{equation*}
         \alpha_{n} \HH^{n-1}( \partial^{*}A)  \leq \epsilon n\omega_n(1+4n(\sigma-1))\sup_{k \in \mathbb{N}} \, \hat{\mu}_{u_k}(U) +\alpha_{n} \liminf_{k \to \infty} \, \mathcal{H}^{n-1}(J^\sigma_{u_k})\,,
 \end{equation*}
  where $\alpha_n:= \int_{\mathbb{S}^{n-1}}|\nu\cdot \xi|$. Moreover, the arbitrariness of $\epsilon >0$ tells us
  \begin{equation*}
      \mathcal{H}^{n-1}(\partial^*A) \leq \liminf_{k \to \infty} \, \mathcal{H}^{n-1}(J^\sigma_{u_k}) \,.
  \end{equation*}
 Finally, by the arbitrariness of $\sigma \geq 1$ and by the fact that $J^{\sigma_1} \subset J^{\sigma_2}$ for $\sigma_1 \geq \sigma_2$ we conclude~\eqref{e:jump-lsc}.
 
 In order to show that $u$ can be extended to the whole of~$U$ as a function in~$GBD(U)$, we define the sequence of $GBD(U)$ functions by 
 \[
 \tilde{u}_k(x):=
 \begin{cases}
 u_k(x) &\text{ if }x \in U \setminus A\,,\\
 0 &\text{ if } x \in A\,.
 \end{cases}
 \]
 Clearly, if we define $v$ as
 \begin{equation}
 \label{e:v-function}
 v(x):=
 \begin{cases}
 u(x) &\text{ if }x \in U \setminus A\,,\\
 0 &\text{ if } x \in A\,.
 \end{cases}
 \end{equation}
then we have $\tilde{u}_k \to v$ a.e.~in~$U$ and 
 \[
 \sup_{k \in\mathbb{N}} \,\hat{\mu}_{\tilde{u}_k}(U) \leq \sup_{k \in \mathbb{N}} \, \hat{\mu}_{u_k}(U)  + \mathcal{H}^{n-1}(\partial^*A) < +\infty \,.
 \]
 Therefore, by using the technique developed in \cite{MR1630504, dal} we can conclude $v \in GBD(U)$.
\end{proof}

 \begin{remark}
Under the additional assumption~\eqref{e:hp-add} with~$u_{k} \in GSBD(U)$, we can obtain the further information $e(u_k) \mathbbm{1}_{U \setminus A } \rightharpoonup e(u)$ in~$L^1(U; \M^{n}_{sym})$ thanks to $e(\tilde{u}_{k}) \rightharpoonup e(u)$ in $L^1(U; \mathbb{M}^{n}_{sym})$ together with the fact $e(u_k) \mathbbm{1}_{U \setminus A } = e(\tilde{u}_k)$ for every $k \in \mathbb{N}$. Moreover,~\eqref{e:onedimlow} can be modified in the following way:
 \begin{equation*}
   \HH^{0} (J_{f}  \cup \partial^*\{f=\pm\infty\} )  \leq \liminf_{k \to \infty} \, \mathcal{H}^0(J_{f_k})\,,
 \end{equation*}
from which it is possible to deduce that
 \[
\mathcal{H}^{n-1}(J_u \cup \partial^* A) \leq  \liminf_{k \to \infty} \, \mathcal{H}^{n-1}(J_{u_k})  \,.
 \]
 \UUU Condition~\eqref{e:hp-add} would also imply that in~\eqref{e:lsc1} we actually control
 \begin{align*}
 \int_{\Pi_\xi} \liminf_{k\to \infty} \, & \Big[ \int_{U_{y}^{\xi}}\epsilon \,\phi(| (\dot{u}_{k})^{\xi}_{y}(t) | )\, \di t   
 + \mathcal{H}^0(U_y^\xi \cap J_{(\hat{u}_k)_y^\xi}) \Big ] \, \di \HH^{n-1}(y)<+\infty\,,
 \end{align*}
 where $(\dot{u}_{k})^{\xi}_{y}$ denotes the absolutely continuous part of~$D(\hat{u}_{k})^{\xi}_{y}$. This in turns allows us to use the well known compactness result for $SBV$ functions in one variable to deduce that the pointwise limit function $f^1$ in \eqref{e:convbv} belongs to $SBV((a,b))$. For this reason, the techniques of~\cite{MR1630504, dal} can be adapted to deduce $v \in GSBD(U)$ (see~\eqref{e:v-function} for the definition of~$v$). The convergence of~$e(u_{k})$ to~$e(u)$ in~$L^{2}(\Om\setminus A; \mathbb{M}^{n}_{sym})$ follows instead by the arguments of~\cite[pp.~10--11]{cris2}.\EEE
%by choosing a vector $c \in \mathbb{R}^n$ such that the function $u^c \in GSBD(U)$ defined as
% \[
% u^c :=
% \begin{cases}
%  u &\text{ on } U \setminus A \\
%  c &\text{ on }A,
% \end{cases}
% \]
% satisfies $\partial^* A \subset J_{u^c}$ (a.e. $c \in \mathbb{R}^n$ does the job). Then, we may apply the lower semi-continuity of the measure of the jump sets for the sequence $\tilde{u}^{c}_{k} := \tilde{u}_{k} + c  \mathbbm{1}_{ A }$. Since $\tilde{u}^c_k \to u^c$ we obtain 
%
% where the first inequality can be deduced from~\eqref{e:lsc}. This concludes the proof of the theorem.
 \end{remark}

\section*{Acknowledgements}

S.A. acknowledges the support of the OeAD-WTZ project CZ 01/2021 and of the FWF through the project I 5149. E.T. was partially supported by the Austrian Science Fund (FWF) through the project F65.

\bibliographystyle{siam}
\bibliography{A-T_20_comp_bib.bib}

\end{document}